\numberwithin{equation}{section}
\newtheorem{theorem}{Theorem}
\newtheorem{proposition}{Proposition}
\newtheorem{lemma}{Lemma}
\newtheorem{remark}{Remark}[section]
\newtheorem{acknowledgement}{Acknowledgement}
\numberwithin{theorem}{section}
\numberwithin{lemma}{section}
\numberwithin{proposition}{section}
\numberwithin{equation}{section}
\def\al{\aligned}
\def\eal{\endaligned}
\def\~{\widetilde}
\def\->{\overrightarrow}
\def\bar{\overline}
\newcommand{\reals}{\mathbb{R}}
\newcommand{\Div}{\textrm{div }}
\newcommand{\Curl}{\textrm{curl }}
\newcommand{\supp}{\textrm{supp }}
\newcommand{\V}{\widetilde{v}}
\newcommand{\Omegabar}{\overline{\Omega}}
\begin{document}

\tracingpages 1
\title[a priori]{\bf A Priori Bounds for the Vorticity of Axis Symmetric Solutions to the Navier-Stokes Equations}
\author{ Jennifer Burke and Qi S. Zhang}

\address{Department of Mathematics,  University of California,
Riverside, CA 92521, USA }
\date{11-11-2008}

\maketitle

\begin{abstract}We obtain a pointwise, a priori bound for the vorticity of axis symmetric
solutions to the 3 dimensional Navier-Stokes equations. The bound
is in the form of a reciprocal of a power of the distance to the
axis of symmetry. This seems to be the first general pointwise
estimate established for the axis symmetric Navier-Stokes equations.
\end{abstract}

\begin{section}{Introduction}

Recall the incompressible Navier-Stokes equations given in
Cartesian coordinates:
$$\Delta v-(v\cdot\nabla)v-\nabla p-\partial_tv=0,\ \Div v=0$$where
the velocity field is
$v=(v_1(x,t),v_2(x,t),v_3(x,t)):\reals^3\times[0,T]\rightarrow\reals^3$
and $p=p(x,t):\reals^3\times[0,T]\rightarrow\reals$ is the
pressure. When one converts the system to cylindrical coordinates
$r,\theta, z$ with $(x_1,x_2,x_3)=(r\cos\theta,r\sin\theta,z)$ and
considers only those solutions that are axis symmetric, then
solutions are restricted to ones of the form:
 \[v(x,t)=v_r(r,z,t)\->{e_r}+v_{\theta}(r,z,t)\->{e_{\theta}}+v_z(r,z,t)\->{e_z}.\]The components $v_r,v_{\theta},v_z$
 are all independent of the angle of rotation $\theta$.  Note $\->{e_r},\->{e_{\theta}},\->{e_z}$ are the basis vectors for
 $\reals^3$given by:
 $$\->{e_r}=\left(\frac{x_1}{r},\frac{x_2}{r},0\right),\ \->{e_{\theta}}=\left(\frac{-x_2}{r},\frac{x_1}{r},0\right),\ \->{e_z}=(0,0,1).$$

Much had been accomplished along the lines of axis symmetric
solutions including the long time existence and uniqueness of strong
solutions if the space region is taken to be all of $\reals^3$, the
external force, if any, as well as the initial velocity $v_0$, are
axis symmetric, and the rotational components, $f_{\theta}$ and
$v_{0,\theta}$, are equal to zero.  That is, the no swirl case is
known, and has been since the late 1960's (see O. A. Ladyzhenskaya
\cite{L}, M. R. Uchoviskii \& B. I. Yudovich \cite{UY}, and S.
Leonardi, J. Malek, J. Necas, \& M. Pokorny \cite{LMNP}). More
recent activities, in the presence of swirl, include the results of
C.-C. Chen, R. M. Strain, T.-P.Tsai, \& H.-T. Yau in \cite{CSTY1} \&
\cite{CSTY2}, where they prove a lower bound on the blow-up rate of
axis symmetric solutions. Similar to these results, more can be
found in the work by G. Koch, N. Nadirashvili, G. Seregin, \& V.
Sverak \cite{KNSS}; under natural assumptions they address the types
singularities that can occur in solutions to the Navier-Stokes
equations.  See also the work by G. Seregin \& V. Sverak \cite{SS}.
Also in the presence of swirl, there is the paper by J. Neustupa \&
M. Pokorny \cite{NP}, proving the regularity of one component
(either $v_r$ or $v_{\theta}$) implies regularity of the other
components of the solution. Also proving regularity is the work of
Q. Jiu \& Z. Xin \cite{JX} under an assumption of sufficiently small
zero dimension scaled norms. We would also like to mention the
regularity results of D. Chae \& J. Lee \cite{CL} who also prove
regularity results assuming finiteness of another certain zero
dimensional integral. Lastly we mention the results of G. Tian \& Z.
Xin \cite{TX}, who constructed a family of singular axis symmetric
solutions with singular initial data, as well as that of T. Hou \&
C. Li \cite{HL} who found a special class of global smooth
solutions. See also a recent extension: T. Hou, Z. Lei \& C. Li
\cite{HLL}.

In our paper, in essence, we prove an upper bound for the (possible)
blow up rate of the vorticity of axis symmetric solutions to the 3
dimensional Navier-Stokes equations.  We first state a well-known a
priori bound for the rotational component of the velocity; a proof
can be found in \cite{CL} Section 3 Proposition 1, for example. From
this we prove an a priori bound on $\omega_{\theta}$, the rotational
component of the curl, in regions close to the axis of symmetry,
using a Moser's Iteration argument similar to that found in the
publication  \cite{Z}, as well as methods in \cite{CSTY1}. With our
bound on $\omega_{\theta}$, we derive a bound on the remaining
components of the curl.

We state the theorem of the paper:
\begin{theorem} \label{ourtheorem} Suppose  $v$ is a smooth,
axis symmetric solution of the 3 dimensional Navier-Stokes equations
in $\reals^3\times(-T,0)$ with initial data $v_0=v(\cdot,-T)\in
L^2(\reals^3)$, and $w$ is the vorticity. Assume further,
$rv_{0,{\theta}}\in L^{\infty}(\reals^3)$ and let $0<R\leq 1$. Then,
there exist constants, $B_1$ and $B_2$, depending only on the
initial data,
such that for all $(x,t)\in P_{2,3,R}\subset\reals^3\times(-T,0)$, where\\
$ P_{2,3,R}=\left\{(x,t): 2R<\sqrt{x_1^2+x_2^2}<3R,\ -3R<x_3<3R,\ -R^2<t<0\right\}$:
\begin{itemize}
\item[(i)] $\displaystyle|\omega_{\theta}(x,t)|
\leq\frac{B_1}{(x_1^2+x_2^2)^{\frac{5}{2}}};$
 \item
[(ii)]
$\displaystyle|\omega_r(x,t)|+|\omega_z(x,t)|\leq\frac{B_2}{(x_1^2+x_2^2)^{5}}$.
\end{itemize}\end{theorem}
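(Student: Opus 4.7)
The plan is to combine the cited $L^\infty$ bound $M:=\|rv_{0,\theta}\|_{L^\infty}$ on $\Gamma:=rv_\theta$ with a Moser iteration for the rescaled swirl vorticity $\tilde\omega:=\omega_\theta/r$, and then to derive (ii) from interior parabolic regularity applied to $\Gamma$. Set $b:=v_r\vec{e}_r+v_z\vec{e}_z$, which is divergence-free with respect to the measure $r\,dr\,dz$. A direct computation starting from the vorticity equation for $\omega_\theta$ gives
$$(\partial_t+b\cdot\nabla)\tilde\omega-\Bigl(\Delta+\tfrac{2}{r}\partial_r\Bigr)\tilde\omega=\frac{1}{r^4}\partial_z(\Gamma^2).$$
Two features of this equation are crucial: the dangerous potential $-r^{-2}$ that appears in the $\omega_\theta$-equation has been absorbed into the operator on the left, which is the 5-dimensional Laplacian acting on $(r,z)$-dependent functions; and the forcing is an exact $z$-derivative, so after one integration by parts in $z$ (exploiting that $r$ is $z$-independent) it will be controlled by $\|\Gamma\|_\infty\leq M$ without requiring any pointwise bound on $\nabla\Gamma$.

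For part (i), I would run Moser iteration on a shrinking sequence of sub-cylinders of $P_{2,3,R}$. Testing the equation against $|\tilde\omega|^{p-2}\tilde\omega\eta^2$ for a smooth parabolic cutoff $\eta$ and using the divergence-freeness of $b$ to remove the convective term, the forcing contributes at most $pM^2R^{-4}$ times quantities that are either of lower order in $|\tilde\omega|$ or absorbable into the Dirichlet form via Young's inequality. Combined with the parabolic Sobolev embedding in the effective dimension $5$ (which yields the improvement $L^p\to L^{7p/5}$), the iteration produces an estimate of the form
$$\sup_{P'}|\tilde\omega|\;\lesssim\;R^{-\alpha}\bigl(\|\tilde\omega\|_{L^2(P_{2,3,R})}+M^2\bigr)$$
on a shrunken sub-cylinder $P'\subset P_{2,3,R}$, for an explicit exponent $\alpha$. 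Since $|\tilde\omega|\leq|\omega|/(2R)$ in the annulus and $\iint|\omega|^2\,dx\,dt\leq\|v_0\|_{L^2}^2$ by the global energy inequality, the $L^2$ norm on the right is of order $R^{-1}$; multiplying $\tilde\omega$ by $r\leq 3R$ and keeping careful track of the arithmetic of the exponents should yield $|\omega_\theta|\leq B_1 r^{-5}$.

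For part (ii), the identities $\omega_r=-\partial_z v_\theta=-\partial_z\Gamma/r$ and $\omega_z=\partial_r(rv_\theta)/r=\partial_r\Gamma/r$ reduce the bound to a pointwise estimate on $\nabla\Gamma$. Since $\Gamma$ satisfies the linear equation
$$(\partial_t+b\cdot\nabla)\Gamma-\Bigl(\Delta-\tfrac{2}{r}\partial_r\Bigr)\Gamma=0$$
and is bounded by $M$, the remaining task is to quantify the drift $b=(v_r,v_z)$ in $P_{2,3,R}$. I would obtain this from the Biot--Savart representation in cylindrical coordinates, using part (i) on a slightly enlarged annular cylinder together with the global energy bound on $v$; an application of interior parabolic regularity (Schauder, once $b$ is bounded, or a second Moser-type argument for differences of $\Gamma$) then gives $|\nabla\Gamma|\leq CR^{-\gamma}$ with $\gamma$ tuned so that dividing by $r\geq 2R$ reproduces the exponent $r^{-10}=(x_1^2+x_2^2)^{-5}$ required in (ii).

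The hardest step will be the Moser iteration in part (i). One must control the source $r^{-4}\partial_z\Gamma^2$ without ever differentiating $\Gamma$---the integration-by-parts trick works only because $r$ is $z$-independent---and all singular factors of $r^{-1}$ must be tracked so that exactly five powers of $R^{-1}$ appear in the end. A secondary difficulty is the transition from (i) to (ii): Biot--Savart is nonlocal, so the local bound on $\omega_\theta$ must be complemented by global control from the energy inequality, and the subsequent gradient estimate on $\Gamma$ must be arranged so that no more than the stated five additional powers of $r^{-1}$ are introduced.
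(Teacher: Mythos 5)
Your part (i) is, in outline, the paper's argument: the same change of unknown $\Omega=\omega_\theta/r$, the same observation that the forcing equals $r^{-4}\partial_z(\Gamma^2)$ and should be integrated by parts in $z$ so that only $\|\Gamma\|_\infty\le M$ is needed, followed by Moser iteration and rescaling. Two points need repair. First, after the integration by parts the term $(p-1)\int r^{-4}\Gamma^2|\tilde\omega|^{p-2}\partial_z\tilde\omega\,\eta^2$ produces, via Young, a contribution in $|\tilde\omega|^{p-2}$ rather than $|\tilde\omega|^{p}$, which breaks the homogeneity of the iteration near the zero set of $\tilde\omega$. The paper's device is to iterate on the shifted, one-signed functions $\overline{\Omega}_{\pm}$, bounded below by $\Lambda=\|v_\theta\|_{L^\infty(P_{1,4,1})}$, so that $v_\theta^2/\overline{\Omega}_{\pm}\le\Lambda$ and every term stays of degree $2q$; you need this (equivalently, iterating on $|\tilde\omega|+\Lambda$) to close the estimate. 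Second, committing to the ``effective dimension $5$'' Sobolev embedding gives the per-step gain $\gamma=7/5$, hence the exponent $\sum_{j\ge1}\gamma^{1-j}=7/2$ on the prefactor instead of $5/2$; after rescaling ($K_{\tilde b}^4=k^{-2}K_b^4$) this yields $|\omega_\theta|\lesssim R^{-6}$, which for $R\le1$ is strictly weaker than the claimed $R^{-5}$. Since the iteration lives on an annulus where $r$ is bounded between positive constants, the measures $r\,dr\,dz$ and $r^3\,dr\,dz$ are comparable and you should use the three-dimensional Sobolev inequality there; the paper instead treats $\tfrac{2}{r}\partial_r$ perturbatively, in the spirit of Chen--Strain--Tsai--Yau, to the same effect.

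For part (ii) your route is genuinely different from the paper's and, as sketched, has a gap. Reducing to a pointwise bound on $\nabla\Gamma$ via $\omega_r=-\partial_z\Gamma/r$, $\omega_z=\partial_r\Gamma/r$ is legitimate, but obtaining that bound by ``interior parabolic regularity'' requires quantitative control of the drift $b$ that you have not secured: Schauder needs $b\in C^\alpha$, and even a Calder\'on--Zygmund bootstrap needs a local $L^\infty$ bound on $b$ with tracked constants, which via Biot--Savart demands its own near-field/far-field splitting and is nowhere carried out. More tellingly, if you instead differentiate the $\Gamma$-equation (your ``Moser-type argument for differences''), the quantities $-\partial_z\Gamma/r$ and $\partial_r\Gamma/r$ satisfy exactly the coupled $(\omega_r,\omega_z)$ system the paper analyzes, whose potential matrix $V$ (built from $\partial_r v_r-r^{-2}$, $\partial_z v_r$, $\partial_r v_z$, $\partial_z v_z$) is controlled only in $L^{10/3}$, by a localized div--curl lemma $\|\nabla v_r\|_{L^q}+\|v_r/r\|_{L^q}+\|\nabla v_z\|_{L^q}\lesssim\|\omega_\theta\|_{L^q}+\|v\|_{L^q}$ combined with an $L^{10/3}$ bound on $\omega_\theta$ extracted from one extra step of the part (i) iteration. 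Verifying that an $L^{10/3}$ potential is subcritical for the Moser scheme (H\"older with exponents $\tfrac{10}{3}$ and $\tfrac{10}{7}$, then interpolation against the parabolic $L^{10/3}$ norm of $(f\psi)^2$) is the actual content of the paper's Section 4 and is absent from your proposal; the final exponent $10$ is not a parameter to ``tune'' toward but the output of this bookkeeping under rescaling.
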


Let us introduce some notation. We use $x=(x_1,x_2,x_3)$ to denote
a point in $\reals^3$ for rectangular coordinates, and in the
cylindrical system we use $r=\sqrt{x_1^2+x_2^2}$,
$\theta=\tan^{-1}\frac{x_2}{x_1}$, $z=x_3$. Let $R>0$, $0<A<B$ be
constants, and define $P_{A,B,R}$ to be the region:
\[P_{A,B,R}=C_{A,B,R}\times (-R^2,0)\] where:
\[C_{A,B,R}=\{(x_1,x_2,x_3)|\ AR\leq r\leq BR,\ 0\leq\theta\leq
2\pi,\ |z|\leq BR\}\subset\reals^3,\] is the hollowed out cylinder
centered at the origin, with inner radius $AR$, outer radius $BR$,
and height extending up and down $BR$ units for a total height of
$2BR$.
\begin{remark}The constants $B_1,B_2$ in the above theorem are recorded here:
\[\al
B_1&=c\left(\|b\|_{L^{\infty}(-R^2,0;L^2(C_{1,4,R}))}^2+R\|rv_{0,\theta}\|_{L^{\infty}(\reals^3)}
\right)^{\frac{5}{2}}\left(\|\omega_{\theta}\|_{L^2(P_{1,4,R})}+\sqrt{R}\|rv_{0,\theta}\|_{L^{\infty}(\reals^3)}\right),\\
B_2&=c\left[\left(\|b\|_{L^{\infty}(-R^2,0;L^2(C_{\frac{1}{10},10,R}))}^4
+R^2\|rv_{0,\theta}\|_{L^{\infty}(\reals^3)}+R^2\right)\|\omega_{\theta}\|_{L^2(P_{\frac{1}{10},10,R})}^2\right.\\
&\hspace{.25cm}\left.+R\|b\|_{L^{\infty}(-R^2,0;L^2(C_{\frac{1}{10},10,R}))}^4+\|v\|_{L^2(P_{\frac{1}{10},10,R})}^2+
R^3\right]^{\frac{5}{2}}\left(\|\omega_r\|_{L^2(P_{\frac{1}{10},10,R})}+\|\omega_z\|_{L^2(P_{\frac{1}{10},10,R})}\right),\eal\]
where $b=(v_r,0,v_z)$ and $c$ is a generic constant.  $B_1$ and
$B_2$ depend only on the initial data, $v_0$, by standard energy
estimates. Also they can be made to be independent of the
smallness of $R$.  Actually, $B_1,B_2\rightarrow0$ when $R\rightarrow0$.\end{remark}
\begin{remark}We assume smoothness of the solution only for technical
simplicity. One can use standard methods to treat the suitable
weak solution case.
%Note, we assume singularities occur only on the
%z-axis, the axis of symmetry. This is because it is well known
%from the work of L. Caffarelli, R. Kohn, \& L. Nierenberg
%\cite{CKN} that for a suitable weak solution to axis symmetric
%Navier-Stokes equations, the singularity set has 1-D Hausdorff
%measure=0 . Singularities off of the axis of symmetry for axis
%symmetric solutions would result is a circle of singularities,
%contrary to this 1-D Hausdorff measure property. Further, via a
%shift is space time, we assume the singularity occurs at the
%origin when time $t=0$. Further,
\end{remark}
The remainder of the paper is organized as follows:
\\Section 2:
Preliminaries\\Section 3: A priori bound for
$\omega_{\theta}$\\Section 4: A priori bound for $\omega_r$ and
$\omega_z$.\end{section}
\begin{section}{Preliminaries}
Let us recall the standard conversion of the 3 dimensional axis
symmetric Navier-Stokes equations to cylindrical form, (see
\cite{CSTY1} for example):
\[\left\{\al &\left(\Delta-\frac{1}{r^2}\right)v_r-(b\cdot\nabla)v_r+\frac{v_{\theta}^2}{r}-\frac{\partial p}{\partial r}-\frac{\partial v_r}{\partial t}=0,\\
 &\left(\Delta-\frac{1}{r^2}\right)v_{\theta}-(b\cdot\nabla)v_{\theta}-\frac{v_{\theta}v_r}{r}-\frac{\partial v_{\theta}}{\partial t}=0,\\
 &\Delta v_z-(b\cdot\nabla)v_z-\frac{\partial p}{\partial z}-\frac{\partial v_z}{\partial t}=0,\\
 &\frac{1}{r}\frac{\partial (rv_r)}{\partial r}+\frac{\partial v_z}{\partial z}=0,\eal\right. \] where $b(x,t)=(v_r,0,v_z)$ and the last equation is the divergence free
 condition. Here $\Delta$ represents the cylindrical scalar Laplacian and $\nabla$ is the cylindrical gradient field which we record
here:
 \[\Delta =\frac{\partial^2}{\partial r^2}+\frac{1}{r}\frac{\partial}{\partial r}+\frac{1}{r^2}\frac{\partial ^2}{\partial \theta ^2}+\frac{\partial^2}{\partial
 z^2}\ \
 \nabla =\left(\frac{\partial }{\partial r},\frac{1}{r}\frac{\partial }{\partial \theta},\frac{\partial }{\partial z}\right).\]

Notice, the equation for $v_{\theta}$ does not depend on the
pressure. Defining $\Gamma=rv_{\theta}$, one sees that the
function $\Gamma$ satisfies:
\begin{equation}\label{Gamma/vtheta}
\Delta \Gamma -(b\cdot\nabla)\Gamma-\frac{2}{r}\frac{\partial
\Gamma}{\partial r}-\frac{\partial\Gamma}{\partial t}=0,\ \Div b=0.
\end{equation}

Also recall the vorticity field $\omega=\Curl v$ for axis
symmetric solutions:
\[\omega(x,t)=\omega_r\->{e_r}+\omega_{\theta}\->{e_{\theta}}+\omega_z\->{e_z},\\
\]
\begin{equation}\label{curlformulas}\omega_r=-\frac{\partial v_{\theta}}{\partial z},\ \omega_{\theta}=\frac{\partial v_r}{\partial z}-\frac{\partial v_z}{\partial r},\ \omega_z=\frac{\partial v_{\theta}}{\partial r}+\frac{v_{\theta}}{r}.\end{equation}
Next we record the equations of vorticity $\omega= \Curl v$, in
cylindrical form (again, see \cite{CSTY1} for example):
\[\left\{\al &\left(\Delta-\frac{1}{r^2}\right)\omega_r-(b\cdot\nabla)\omega_r+\omega_r\frac{\partial v_r}{\partial
r}+\omega_z\frac{\partial v_r}{\partial z}-\frac{\partial
\omega_r}{\partial t}=0,\\
&\left(\Delta-\frac{1}{r^2}\right)\omega_{\theta}-(b\cdot\nabla)\omega_{\theta}+2\frac{v_{\theta}}{r}\frac{\partial
v_{\theta}}{\partial
z}+\omega_{\theta}\frac{v_r}{r}-\frac{\partial
\omega_{\theta}}{\partial t}=0,\\
&\Delta\omega_{z}-(b\cdot\nabla)\omega_{z}+\omega_z\frac{\partial
v_z}{\partial z}+\omega_{r}\frac{\partial v_z}{\partial
r}-\frac{\partial \omega_{z}}{\partial t}=0.\eal\right.\] Define
$\displaystyle\Omega=\frac{\omega_{\theta}}{r}$, then we have that
$\Omega$ satisfies:
\begin{equation}\label{Omega/omegatheta}
\Delta \Omega -(b\cdot\nabla)\Omega+\frac{2}{r}\frac{\partial
\Omega}{\partial r}-\frac{\partial\Omega}{\partial
t}+\frac{2v_{\theta}}{r^2}\frac{\partial v_{\theta}}{\partial z}=0,\
\Div b=0. \end{equation} We confirm this by utilizing the fact that
$r\Omega=\omega_{\theta}$ and thus satisfies the rotational equation
for vorticity:
\[
\left(\Delta -\frac{1}{r^2}\right)(r\Omega)
-(b\cdot\nabla)(r\Omega)+\frac{2v_{\theta}}{r}\frac{\partial
v_{\theta}}{\partial z}+\frac{v_r}{r}
(r\Omega)-\frac{\partial(r\Omega)}{\partial t}=0.\] We compute
with the product rule on each term:
\[\al\Delta(r\Omega)=&r\frac{\partial^2\Omega}{\partial r^2}+3\frac{\partial\Omega}{\partial r}+\frac{\Omega}{r}+r\frac{\partial^2\Omega}{\partial z^2},\\
-\frac{1}{r^2}(r\Omega)=&-\frac{\Omega}{r},\\
\left(-b\cdot\nabla\right)(r\Omega)=&-v_r\Omega-r(b\cdot\nabla)\Omega,\\
\frac{v_r}{r}(r\Omega)=&v_r\Omega,\\
-\frac{\partial}{\partial
t}(r\Omega)=&-r\frac{\partial\Omega}{\partial t}.\eal\] We sum the
above and the inhomogeneous term,
$\displaystyle\frac{2v_{\theta}}{r}\frac{\partial
v_{\theta}}{\partial z}$, to get:
\[r\frac{\partial^2\Omega}{\partial r^2}+\frac{\partial\Omega}{\partial r}+r\frac{\partial^2\Omega}{\partial z^2}-r(b\cdot\nabla)\Omega+2\frac{\partial\Omega}{\partial r}-r\frac{\partial\Omega}{\partial t}+\frac{2v_{\theta}}{r}\frac{\partial
v_{\theta}}{\partial z}=0.\] Grouping all but the last term,
factoring out and dividing through by $r$, provides:
\[\Delta \Omega -(b\cdot\nabla)\Omega+\frac{2}{r}\frac{\partial
\Omega}{\partial r}-\frac{\partial\Omega}{\partial
t}+\frac{2v_{\theta}}{r^2}\frac{\partial v_{\theta}}{\partial
z}=0.\]
 Notice equations (\ref{Gamma/vtheta}) and
(\ref{Omega/omegatheta}) are similar except for a sign change on
one term and the addition of an inhomogeneous term in
(\ref{Omega/omegatheta}). Equation (\ref{Gamma/vtheta}) is used in
\cite{CSTY1} to provide the lower bound on the blow-up rate for
axis symmetric solutions.
%This equation (1.1) and Moser'e Iteration yields a
%point-wise bound described in Proposition A in the Appendix, which is slightly more
%general than the point-wise bound on $v_{\theta}$ referred to in the body of this
%paper, but is not quite as strong.
As we work with equation (\ref{Omega/omegatheta})
%our only assumptions on $b$ will be
%divergence free and $b(x,t)\in L^{\infty}([0,T],L^2(\reals ^3))$.
%Both follow if in fact $b(x,t)=(v_r,0,v_z)$ since by definition
%they are true of $v(x,t)=(v_r,v_{\theta},v_z)$. We also assume the
we assume the initial condition that provides for the pointwise
bound of $v_{\theta}$ that appears in \cite{CL} which we restate
below. Note, this is also implicitly stated in \cite{NP} (in Step
3.2 p. 396-397).
\begin{proposition} \label{chaeandleeprop}Suppose  $v$ is a smooth, axis symmetric solution of the 3 dimensional Navier-Stokes
equations with initial data $v_0\in L^2(\reals^3)$.  If
$rv_{0,{\theta}}\in L^p(\reals^3)$, then $rv_{\theta}\in
L^{\infty}(0,T;L^p(\reals^3))$. In particular, if $p=\infty$,
\[
|v_{\theta}(x,t)|\leq \frac{||rv_{0,
\theta}||_{L^{\infty}(\reals^3)}}{\sqrt{x_1^2+x_2^2}}.\]
\end{proposition}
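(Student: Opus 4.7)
The plan is to treat the equation (\ref{Gamma/vtheta}) satisfied by \(\Gamma = r v_\theta\) as a linear, homogeneous parabolic equation for \(\Gamma\) and to run a standard \(L^p\) energy estimate on it; the pointwise \(p=\infty\) bound then follows either by sending \(p\to\infty\) or by a direct maximum-principle argument. The essential features of (\ref{Gamma/vtheta}) are that it has no zeroth-order term, that the drift \(b\) is divergence free, and that, although the coefficient \(-\frac{2}{r}\partial_r\) is singular on the axis, the cylindrical volume element \(dx = r\,dr\,d\theta\,dz\) cancels the \(1/r\) factor exactly. It is this cancellation that makes the whole argument work.

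First I would multiply (\ref{Gamma/vtheta}) by \(|\Gamma|^{p-2}\Gamma\) and integrate over \(\reals^3\). The time derivative contributes \(\frac{1}{p}\frac{d}{dt}\|\Gamma\|_{L^p}^p\); integration by parts on the Laplacian produces \(-(p-1)\int|\Gamma|^{p-2}|\nabla\Gamma|^2\,dx \le 0\); the convective term becomes \(\frac{1}{p}\int b\cdot\nabla|\Gamma|^p\,dx\), which vanishes because \(\Div b = 0\); and the singular drift, after passing to cylindrical coordinates, equals
\[
\frac{2}{p}\int_0^{2\pi}\!\int_{\reals}\!\int_0^\infty \partial_r|\Gamma|^p\,dr\,dz\,d\theta,
\]
whose inner radial integral vanishes for each fixed \((\theta,z)\) since \(\Gamma(0,z,t)=0\) by axisymmetry and \(\Gamma\) decays as \(r\to\infty\). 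Assembling these pieces gives \(\frac{d}{dt}\|\Gamma(\cdot,t)\|_{L^p}^p \le 0\), and hence \(\|r v_\theta(\cdot,t)\|_{L^p} \le \|r v_{0,\theta}\|_{L^p}\) for all \(t\in[0,T]\), which is the claimed \(L^\infty(0,T;L^p)\) bound. Sending \(p\to\infty\) (using smoothness of \(v\) so that \(\|\Gamma(\cdot,t)\|_{L^p}\) is finite and converges to its \(L^\infty\) norm, and similarly for the initial datum) yields \(|\Gamma(x,t)|\le \|r v_{0,\theta}\|_{L^\infty}\) pointwise, and dividing by \(r\) gives the advertised estimate on \(v_\theta\).

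The main technical obstacle is the rigorous justification of the integrations by parts and of the vanishing of the boundary terms at the axis. I would handle this by a standard cutoff argument: introduce smooth cutoffs \(\eta_\varepsilon(r)\) that vanish in a neighborhood of \(r=0\) and for large \(|x|\), carry the \(L^p\) identity through with these cutoffs, and let \(\varepsilon\to 0\), using the smoothness of \(v\) together with \(\Gamma = r v_\theta \to 0\) as \(r\to 0\) to kill the error terms produced by \(\eta_\varepsilon'\). The details of this cutoff argument are carried out in Chae--Lee \cite{CL}, and I would follow their argument closely.
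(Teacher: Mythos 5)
Your proof is correct and follows essentially the same route as the paper, which does not prove this proposition itself but cites Chae--Lee \cite{CL}, whose argument is precisely this $L^p$ energy estimate on $\Gamma=rv_{\theta}$: using $\Div b=0$, the cancellation of the singular drift $\frac{2}{r}\partial_r$ against the volume element $r\,dr\,d\theta\,dz$, the vanishing of $\Gamma$ on the axis, and then $p\to\infty$. Nothing further is needed.
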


We will also utilize the scaling of the Navier-Stokes equations in
conjunction with a change of variables. We recall that scaling of
the equations now; the pair $(v(x,t),p(x,t))$ is a solution to the
system, if and only if for any $k>0$ the re-scaled pair $( \V
(x,t),\widetilde{p}(x,t) )$ is also a solution, where
\[ \V(x,t)=kv(kx,k^2t),\hspace{1cm}\widetilde{p}(x,t)=k^2p(kx,k^2t).\]
Thus, if $(v,p)$ is a solution to the axis symmetric Navier-Stokes
equations for$(x,t)\in P_{1,4,k}$, then
$(\V(\widetilde{x},\widetilde{t}),\widetilde{p}(\widetilde{x},\widetilde{t}))$
is a solution to the equation in the variables
$\widetilde{x}=\frac{x}{k},\widetilde{t}=\frac{t}{k^2}$ when
$(\widetilde{x},\widetilde{t})\in P_{1,4,1}$. We note here how
certain quantities scale or change due to the above. Here D is any
domain in $\reals^3$ and $kD=\{x:x=ky,\ y\in D\}$:
\[\al
r=\sqrt{x_1^2+x_2^2}:\hspace{3.8cm}
\widetilde{r}=&\sqrt{\left(\frac{x_1}{k}\right)^2+\left(\frac{x_2}{k}\right)^2}=\frac{r}{k}\\&\\
\\
\|v(x,t)\|_{L^2(kD\times(-(kR)^2,0))}:\hspace{0.4cm}\|\~{v}(\~{x},\~{t})\|_{L^2(D\times(-R^2,0))}=&\left(\int_{-R^2}^0\int_D|\~{v}(\~{x},\~{t})|^2d\~{x}d\~{t}\right)^{\frac{1}{2}}\\
=&\left(\int_{-(kR)^2}^0\int_{kD}|kv(x,t)|^2\frac{1}{k^5}dxdt\right)^{\frac{1}{2}}\\
=&\frac{1}{k^{\frac{3}{2}}}\|v(x,t)\|_{L^2(kD\times(-(kR)^2,0))}\\&\\
b(x,t)=(v_r,0,v_z):\hspace{3.2cm}\widetilde{b}(x,t)=&(kv_r(kx,k^2t),0,kv_z(kx,k^2t))\\=&kb(kx,k^2t),\ (x,t)\in P_{1,4,k}\\ \Rightarrow &\widetilde{b}(\widetilde{x},\widetilde{t})=kb(x,t)\\
\ &\ \\
\|b(x,t)\|_{L^{\infty}(-(kR)^2,0;L^2(kD))}:\hspace{0.1cm}
\|\~{b}(\~{x},\~{t})\|_{L^{\infty}(-R^2,0;L^2(D))}=&\sup_{-R^2\leq \~{t}<0}\left(\int_{D}|\~{b}(\~{x},\~{t})|^2d\~{x}\right)^{\frac{1}{2}}\\
=&\sup_{-(kR)^2\leq t<0}\left(\int_{kD}|kb(x,t)|^2\frac{1}{k^3}dx\right)^{\frac{1}{2}}\\
=&\frac{1}{k^{\frac{1}{2}}}\|b(x,t)\|_{L^{\infty}{(-(kR)^2,0;L^2(kD))}}\\
&\\
\omega(x,t):\hspace{3.1cm}\widetilde{\omega}(x,t)=&
k^2\omega(kx,k^2t),\ (x,t)\in P_{1,4,k}\\\Rightarrow
&\widetilde{\omega}(\widetilde{x},\widetilde{t})=k^2\omega(x,t)\\
&\\
\|\omega(x,t)\|_{L^2(kD\times(-(kR)^2,0))}:\hspace{0.4cm}\|\~{\omega}(\~{x},\~{t})\|_{L^2(D\times(-R^2,0))}=&\left(\int_{-R^2}^0\int_D|\~{\omega}(\~{x},\~{t})|^2d\~{x}d\~{t}\right)^{\frac{1}{2}}\\
=&\left(\int_{-(kR)^2}^0\int_{kD}|k^2\omega(x,t)|^2\frac{1}{k^5}dxdt\right)^{\frac{1}{2}}\\
=&\frac{1}{k^{\frac{1}{2}}}\|\omega(x,t)\|_{L^2(kD\times(-(kR)^2,0))}\eal\]

\vspace{.5cm} One can show
$\widetilde{\Gamma}(\~{x},\~{t})=\widetilde{r}\widetilde{v_{\theta}}(\~{x},\~{t})$
is a solution to (\ref{Gamma/vtheta}) and
$\widetilde{\Omega}(\~{x},\~{t})=\frac{\widetilde{\omega_{\theta}}(\~{x},\~{t})}{\widetilde{r}}$
is a solution to (\ref{Omega/omegatheta}) in the variables
$(\~{x},\~{t})\in P_{1,4,1}$. We will do most of our computations on
scaled cylinders.
%Before beginning the argument, we recall a certain scaling
%invariant quantity for n=3, as found in [SS] that will be useful
%as well
%\[\frac{1}{R^3}\int_{Q(z_0,R)}|v|^2dz\]\end{section}
\end{section}
\begin{section}{A priori bound for $\omega_{\theta}$}

In this Section, and in Section 4, we are going to drop the
"tilde" notation for the sake of simplicity for a time when
computations take place over the scaled cylinders. We will then
recall that the $L^2-L^{\infty}$ bounds derived are for scaled
functions with a change of variables and we will discuss the
consequences of this in subsections labeled "re-scaling". Note,
however, because of this scaling, we must keep a close watch on
constants that involve the quantities discussed in the
preliminaries.

\vspace{.5cm}
\textbf{Proof of Theorem \ref{ourtheorem} (i):}

In the region $P_{1,4,1}$ we do our analysis on
(\ref{Omega/omegatheta}):
\[
\Delta \Omega -(b\cdot\nabla)\Omega+\frac{2}{r}\frac{\partial
\Omega}{\partial r}-\frac{\partial\Omega}{\partial
t}+\frac{2v_{\theta}}{r^2}\frac{\partial v_{\theta}}{\partial
z}=0,\ \Div b=0.\]

A flow chart for the argument to prove part (i) of Theorem \ref{ourtheorem} is as follows:\\
Energy Estimates:\\
Step 1: Use a refined
cut-off function.\\
Step 2: Estimate drift term $(b\cdot\nabla)\Omega$ using methods similar to \cite{Z}.\\
Step 3: Estimate a term involving the cut-off.\\
Step 4: Estimate the term involving the directional
derivative $\partial_r$ using a method similar to that in \cite{CSTY1}.\\
Step 5: Estimate the inhomogeneous term utilizing the bound in Proposition \ref{chaeandleeprop} (see \cite{CL}).\\
$L^2-L^{\infty}$ Estimate on Solutions to (\ref{Omega/omegatheta}) via Moser's Iteration.\\
$L^2-L^{\infty}$ Estimate on $\omega_{\theta}$ via re-scaling.

\vspace{.5cm} \textbf{Energy Estimates:}

\vspace{.5cm} \textbf{Step 1:} We use a revised cut-off function
and the equation to obtain inequality (\ref{T1T2T3T4}) below.

 Let $q\geq 1$ be a
rational number.  We note that eventually we will be applying
Moser's iteration, where at each step
$q=\left(1+\frac{2}{n}\right)^i,\ i\in\mathbb{N}$ and here $n=3$.
Let
\begin{equation}\label{Lambda}\Lambda=||v_{\theta}||_{L^{\infty}(P_{1,4,1})}\leq
||rv_{0,\theta}||_{L^{\infty}(\reals^3)}<\infty,\end{equation}
utilizing the hypothesis that $rv_{0,\theta}\in
L^{\infty}(\reals^3)$, the point-wise bound in Proposition \ref{chaeandleeprop},
and the fact that $1<\sqrt{x_1^2+x_2^2}<4$. Let
\begin{equation}\Omegabar_+(x,t)=\left\{\begin{array}{cc}
  \Omega(x,t)+\Lambda & \Omega(x,t)\geq0, \\
  \Lambda & \Omega(x,t)<0. \\
\end{array}\right.\end{equation}
Note that $\Omegabar_+\geq\Lambda$ and all derivatives of
$\Omegabar_+$ on the set where $\Omega(x,t)<0$ are equal to zero.
This function is also Lipschitz and $\Omega$ we assume to be smooth.
At interfaces boundary terms upon integration by parts will cancel
and so the calculations below can be made sense of. Direct
computation yields:\begin{equation}\label{Omegaq}\Delta
\Omegabar_+^q -(b\cdot \nabla) \Omegabar_+^q
+\frac{2}{r}\partial_r \Omegabar_+^q-\partial _t
\Omegabar_+^q=-\frac{q\Omegabar_+^{q-1}}{r^2}\frac{\partial
v_{\theta}^2}{\partial z}+q(q-1)\Omegabar_+^{q-2}|\nabla
\Omegabar_+|^2.\end{equation} Let $\frac{5}{8}\leq
\sigma_2<\sigma_1 \leq 1$. We let
\[P_{5-4\sigma_i,4\sigma_i,1}=\{(r,\theta,z)|\
(5-4\sigma_i)<r<4\sigma_i,\ 0\leq\theta\leq 2\pi,\
|z|<4\sigma_i\}\times (-\sigma_i^2,0)\] for $i=1,2$.  For convenience denote the
space portion, which is a hollowed out cylinder, as $C(\sigma_i)$
and let
\[P(\sigma_i)=P_{5-4\sigma_i,4\sigma_i,1}=C(\sigma_i)\times(-\sigma_i^2,0).\]

Choose $\psi=\phi(y)\eta(s)$ to be a refined cut-off function
satisfying:\[\supp\phi \subset C(\sigma_1);\ \phi(y)=1 \textrm{ for all } y\in
C(\sigma_2);\ \frac{|\nabla \phi|}{\phi
^{\delta}}\leq\frac{c_1}{\sigma_1-\sigma_2}\textrm{ for }\delta \in (0,1);\ 0\leq \phi\leq 1;\
\]\[\supp \eta \subset (-\sigma_1^2,0];\
\eta(s)=1,\textrm{ for all } s\in [-\sigma_2^2,0];\ |\eta '|\leq
\frac{c_2}{(\sigma_1-\sigma_2)^2};\ 0\leq \eta \leq 1.\] Let
$f=\Omegabar_+^q$ and use $f\psi ^2$ as a test function in
(\ref{Omegaq}) to get:
\[
\al \int_{P(\sigma_1)}(\Delta f - (b&\cdot\nabla)f -\partial _s f
+\frac{2}{r}\partial
_rf)f \psi ^2 dyds\\
&=
\int_{P(\sigma_1)}q(q-1)\Omegabar_+^{q-2}|\nabla \Omegabar_+|^2f \psi ^2 dyds-\int_{P(\sigma_1)}\frac{q\Omegabar_+^{q-1}}{r^2}\frac{\partial v_{\theta}^2}{\partial z}f\psi^2dyds\\
\ &=q(q-1)\int_{P(\sigma_1)}\Omegabar_+ ^{-2}|\nabla \Omegabar_+|^2f^2\psi ^2dyds-\int_{P(\sigma_1)}\frac{q\Omegabar_+^{2q-1}}{r^2}\frac{\partial v_{\theta}^2}{\partial z}\psi^2dyds\\
\ &\geq
-\int_{P(\sigma_1)}\frac{q\Omegabar_+^{2q-1}}{r^2}\frac{\partial
v_{\theta}^2}{\partial z}\psi^2dyds. \eal
\]

Integration by parts on the first term implies:
\[\al\int_{P(\sigma_1)}\nabla (f \psi ^2&)\nabla f dyds\\
&\leq \int_{P(\sigma_1)}\left(-b\cdot\nabla f(f \psi ^2)-\partial
_s f(f \psi ^2) +\frac{2}{r}\partial _rf(f \psi ^2)
+\frac{q\Omegabar_+^{2q-1}}{r^2}\frac{\partial
v_{\theta}^2}{\partial z}\psi^2\right)dyds\eal\] A manipulation
using the product rule shows:
\[\int_{P(\sigma_1)}\nabla(f \psi ^2)\nabla f dyds=\int_{P(\sigma_1)}\left(|\nabla(f \psi)|^2-|\nabla \psi|^2f^2\right)dyds.
\]
Thus,
\[\al
\int_{P(\sigma_1)}&|\nabla(f \psi)|^2dyds\\
&\leq \int_{P(\sigma_1)}\left(-b\cdot\nabla f(f \psi ^2)-\partial
_s f(f \psi ^2) +\frac{2}{r}\partial _rf(f \psi ^2)
+\frac{q\Omegabar_+^{2q-1}}{r^2}\frac{\partial
v_{\theta}^2}{\partial
z}\psi^2+|\nabla\psi|^2f^2\right)dyds.\eal\] Integration by parts
on the term involving the time derivative yields:
\[\al
\int_{P(\sigma_1)}-(\partial _s f)f \psi ^2dyds&=-\frac{1}{2}\int_{P(\sigma_1)}\partial_s(f^2)\psi^2 dyds\\
&=-\frac{1}{2}\left(\int_{C(\sigma_1)}f^2\psi^2(y,0)dy
-\int_{C(\sigma_1)}f^2\psi^2(y,-\sigma_1
^2)dy\right)\\&\hspace{2cm}+\frac{1}{2}\int_{P(\sigma_1)}\partial_s(\psi^2)f^2dyds.\eal\]
Our
cut-off functions provides $\psi^2=\left(\phi\eta\right)^2,\ \eta(0)=1,\
\eta(-\sigma_1 ^2)=0$, and $0\leq\phi\leq1$.  Thus,
\[\al
\int_{P(\sigma_1)}-(\partial _s f)f \psi ^2dyds&=-\frac{1}{2}\int_{C(\sigma_1)}f^2(y,0)\phi^2(y)dy+\int_{P(\sigma_1)}\phi^2(\eta\partial_s\eta)f^2dyds\\
&\leq-\frac{1}{2}\int_{C(\sigma_1)}f^2(y,0)\phi^2(y)dy+\int_{P(\sigma_1)}(\eta\partial_s\eta)f^2dyds.\eal\]

And so,
\begin{equation}\label{T1T2T3T4}\al\int_{P(\sigma_1)}|\nabla(f
\psi)|^2&dyds+\frac{1}{2}\int_{C(\sigma_1)}f^2(y,0)\phi^2(y)dy\\&\leq\int_{P(\sigma_1)}-b\cdot\nabla
f(f\psi^2)dyds
+\int_{P(\sigma_1)}(\eta\partial_s\eta+|\nabla\psi|^2)f^2dyds\\&\hspace{.5cm}+\int_{P(\sigma_1)}\frac{2}{r}\partial
_r
f(f\psi^2)dyds+\int_{P(\sigma_1)}\frac{q\Omegabar_+^{2q-1}}{r^2}\frac{\partial
v_{\theta}^2}{\partial
z}\psi^2dyds\\
\ &\ \\&:=T_1+T_2+T_3+T_4.\eal\end{equation}

\vspace{1cm}
\textbf{Step 2:}
To deal with $T_1$ we refer to \cite{Z} where a parabolic
equation with a similar drift term is explored.

Since $\Div b=0$,
\[
\al T_1&=\int_{P(\sigma_1)}-b\cdot(\nabla f)(f
\psi^2)dyds\\
&=\frac{1}{2}\int_{P(\sigma_1)}-b\psi^2\cdot\nabla (f^2)dyds=\frac{1}{2}\int_{P(\sigma_1)}\Div(b\psi^2)f^2dyds\\
&=\frac{1}{2}\int_{P(\sigma_1)}\Div b (\psi
f)^2dyds+\frac{1}{2}\int_{P(\sigma_1)}b\cdot\nabla (
\psi  ^2)f^2dyds\\
&= \int_{P(\sigma_1)}b\cdot(\nabla\psi)\psi f^2dyds\\
&\leq \left|\int_{P(\sigma_1)}\left(
b\psi^{1+\delta}|f|^{2-a}\right)\left(\frac{\nabla
\psi}{\psi^{\delta}}|f|^a\right)dyds\right|, \eal\] for
$0<\delta<1,\ 0<a<2$ which we introduce in order to split the
above integral using H\"{o}lder's inequality. Apply H\"{o}lder's
inequality with exponents $\frac{4}{3}$ and 4:
\[
\al
T_1&\leq\left(\int_{P(\sigma_1)}|b|^{\frac{4}{3}}\left(\psi^{1+\delta}|f|^{2-a}\right)^{\frac{4}{3}}dyds\right)^{\frac{3}{4}}\left(\int_{P(\sigma_1)}\left(\frac{|\nabla
\psi|}{\psi^{\delta}}|f|^a\right)^4dyds\right)^{\frac{1}{4}}.\eal\]
We would like $\frac{4}{3}(1+\delta)=2,\ \frac{4}{3}(2-a)=2$. This
holds if $\delta=\frac{1}{2},\ a=\frac{1}{2}$. Using properties of
the cutoff function we get:
\[
T_1\leq\left(\int_{P(\sigma_1)}|b|^{\frac{4}{3}}(f\psi)^2dyds\right)^{\frac{3}{4}}\frac{c_1
}{\sigma_1-\sigma_2}\left(\int_{P(\sigma_1)}f^2dyds\right)^{\frac{1}{4}}.\]
Next we fix $\epsilon_1>0$ and we apply Young's inequality, with
exponents $\frac{4}{3}$ and $4$: \[\al
T_1&\leq\left(\frac{4}{3}\epsilon_1\right)^{\frac{3}{4}}\left(\int_{P(\sigma_1)}|b|^{\frac{4}{3}}(f\psi)^2dyds\right)^{\frac{3}{4}}\cdot\left(\frac{4}{3}\epsilon_1\right)^{-\frac{3}{4}}\frac{c_1
}{\sigma_1-\sigma_2}\left(\int_{P(\sigma_1)}f^2dyds\right)^{\frac{1}{4}}\\
&\leq\epsilon_1\int_{P(\sigma_1)}|b|^{\frac{4}{3}}(f\psi)^2dyds+\frac{c_3\epsilon_1^{-3}}{(\sigma_1-\sigma_2)^4}\int_{P(\sigma_1)}f^2dyds.\eal\]
Thus,
\begin{equation}\label{T1}|T_1|\leq
\epsilon_1c_4K_b^{\frac{4}{3}}(C_{1,4,1})\int_{P(\sigma_1)}|\nabla(f\psi)|^2dyds+\frac{c_3\epsilon_1^{-3}}{(\sigma_1-\sigma_2)^4}\int_{P(\sigma_1)}f^2dyds
,\end{equation} where $K_b(C_{1,4,1})$ is the constant:
\[K_b(C_{1,4,1})=\|b\|_{L^{\infty}(-1,0;L^2(C_{1,4,1}))}.\]  This
last inequality holds as a result of $b=(v_r,0,v_z)\in
L^{\infty}((0,\infty),L^2(\reals^3))$, H\"{o}lder's inequality
with exponents $\frac{3}{2}$ and $3$, and the Sobolev Inequality,
noting the dimension $n=3$:
\[\al\int_{P(\sigma_1)}|b|^{\frac{4}{3}}(f\psi)^2dyds&\leq\int_{-\sigma_1
^2}^0\left(\int_{C(\sigma_1)}|b|^2dy\right)^{\frac{2}{3}}\left(\int_{C(\sigma_1)}(f\psi)^6dy\right)^{\frac{1}{3}}ds\\
&\leq c_4\sup_{-\sigma_1^2\leq s\leq
0}\left(\int_{C(\sigma_1)}|b|^2dy\right)^{\frac{2}{3}}\int_{P(\sigma_1)}|\nabla(f\psi)|^2dyds\\
&\leq
c_4K_b^\frac{4}{3}(C_{1,4,1})\int_{P(\sigma_1)}|\nabla(f\psi)|^2dyds.\eal\]

\vspace{1cm} \textbf{Step 3:} The term involving the cut-off function,
$T_2$, is standard.  We use
\[T_2=\int_{P(\sigma_1)}(\eta\partial_s\eta+|\nabla\psi|^2)f^2dyds,\] and properties of the cutoff, \[\
|\nabla \psi |^2=|\eta \nabla \phi |^2\leq \left( \frac{|\nabla
\phi |}{\phi ^{\delta }}\right) ^2\leq
\frac{c_1^2}{(\sigma_1-\sigma_2)^2}\] and \[|\eta
\partial _s \eta |\leq |\partial _s \eta |\leq \frac{c_2}{(\sigma_1-\sigma_2)^2},\] to get:\begin{equation}\label{T2}|T_2|\leq
\frac{c_5}{(\sigma_1-\sigma_2)^2}\int_{P(\sigma_1)}f^2dyds.\end{equation}

\vspace{1cm} \textbf{Step 4:} As we deal with
$T_3=\int_{P(\sigma_1)}\frac{2}{r}\partial _rf(f\psi^2)dyds$, we
note we are assuming the integration takes place away from the
singularity set of the solution to the axis symmetric Navier
Stokes equations and away from the z-axis in general. Thus all
functions are bounded and smooth and $r$ varies between two
positive constants, confirming this quantity is
 integrable.  We also utilize the cylindrical coordinates of
the axis symmetric case, and integration by parts:
\[\al T_3&=\int_{P(\sigma_1)}\frac{2}{r}\partial
_rf(f\psi^2)dyds\\&=\int_{P(\sigma_1)}\frac{1}{r}\partial_r(f^2)\psi^2rdrd\theta
dzds\\&=\int_{P(\sigma_1)}\partial_r(f^2)\psi^2drd\theta
dzds\\&=-\int_{P(\sigma_1)}\partial_r(\psi^2)f^2drd\theta
dzds\\&=-\int_{P(\sigma_1)}\frac{2}{r}\partial_r\psi (\psi
f^2)rdrd\theta dzds\\
&=-\int_{P(\sigma_1)}\frac{2}{r}\partial_r(\psi)(\psi f^2)dyds\\
&=-\int_{P(\sigma_1)}\frac{2}{r}\overrightarrow{e_r}\cdot\nabla\psi(\psi
f^2)dyds.\eal\] The Cauchy-Schwartz inequality then implies:
\[|T_3|\leq\int_{P(\sigma_1)}\frac{2}{r}|\nabla\psi|\psi f^2dyds.\]
Next we use splitting methods similar to those found in
\cite{CSTY1}; fix $\epsilon_2>0,\ m>1$ to be chosen later and
apply Young's inequality with exponents $m$ and $\frac{m}{m-1}$:
\[
\al|T_3|&\leq\int_{P(\sigma_1)}\frac{2}{r}|\nabla\psi|\psi f^2dyds\\
&=\int_{P(\sigma_1)}\left((m\epsilon_2)^{\frac{1}{m}}\frac{2}{r}(\psi
f)^{\frac{2}{m}}\right)\times\left((m\epsilon_2)^{\frac{-1}{m}}\psi^{\frac{m-2}{m}}|\nabla
\psi|f^{\frac{2(m-1)}{m}}\right)dyds\\
&\leq\epsilon_2\int_{P(\sigma_1)}\left(\frac{2}{r}\right)^{m}\psi^2f^2dyds+\frac{c_6m^{\frac{m-2}{m-1}}\epsilon_2^{\frac{-1}{m-1}}}{m-1}\int_{P(\sigma_1)}\left(\frac{|\nabla\psi|}{\psi^{\frac{2-m}{m}}}\right)^{\frac{m}{m-1}}f^2dyds.\eal\]

Properties of the cutoff yield:
\[
|T_3|\leq\epsilon_2\int_{P(\sigma_1)}\left(\frac{2}{r}\right)^{m}(f\psi)^2dyds+\frac{c_6m^{\frac{m-2}{m-1}}\epsilon_2^{\frac{-1}{m-1}}}{(m-1)(\sigma_1-\sigma_2)^{\frac{m}{m-1}}}\int_{P(\sigma_1)}f^2dyds.\]

Now consider the quantity:
\[\int_{C(\sigma_1)}\left(\frac{2}{r}\right)^{m}(f\psi)^2dy.\]
Apply H\"{o}lder's inequality with exponents $\frac{3}{2}$ and
$=3$ and the Sobolev inequality, $n=3$, then:
 \[
 \al
 \int_{C(\sigma_1)}\left(\frac{2}{r}\right)^{m}(f\psi)^2dy
 &\leq\left(\int_{C(\sigma_1)}\left(\frac{2}{r}\right)^{\frac{3m}{2}}dy\right)^{\frac{2}{3}}\times \ \left(\int_{C(\sigma_1)}(f\psi)^6dy\right)^{\frac{1}{3}}\\
 &\leq c_7\left(\int_{C(\sigma_1)}\left(\frac{2}{r}\right)^{\frac{3m}{2}}dy\right)^{\frac{2}{3}}\times\int_{P(\sigma_1)}|\nabla(f\psi)|^2dy\\
 &\leq c_{11}\int_{C(\sigma_1)}|\nabla (f\psi)|^2dy,
 \eal
 \]
 if we choose $m$ appropriately. To see this, we calculate:
\[
\al
c_7\left(\int_{C(\sigma_1)}\left(\frac{2}{r}\right)^{\frac{3m}{2}}dy\right)^{\frac{2}{3}}&=
c_8\left(\int_{-4\sigma_1}^{4\sigma_1}\int_0^{2\pi}\int_{\sigma_1}^{4\sigma_1}\frac{1}{r^{\frac{3m}{2}}}rdrd\theta
dz\right)^{\frac{2}{3}}\\&=\left(c_9\sigma_1\sigma_1^{-\frac{3m}{2}+2}\right)^{\frac{2}{3}}\hspace{1cm}\textrm{
if we choose $1<m<\frac{4}{3}$}\\&=c_{10}(\sigma_1)^{2-m}
\\
&\leq c_{11}\hspace{3.5cm}\textrm{ since $\frac{5}{8}\leq\sigma_2<\sigma_1\leq1$.}\eal
\]
Note also:
\[
\al
c_7\left(\int_{C(\sigma_1)}\left(\frac{2}{r}\right)^{\frac{3m}{2}}dy\right)^{\frac{2}{3}}&=
c_8\left(\int_{-4\sigma_1}^{4\sigma_1}\int_0^{2\pi}\int_{\sigma_1}^{4\sigma_1}\frac{1}{r}drd\theta
dz\right)^{\frac{2}{3}}\hspace{1cm}\textrm{ if
$m=\frac{4}{3}$}\\&=c_9\sigma_1^{\frac{2}{3}}\\&\leq
c_{10}\hspace{3.5cm}\textrm{ since
$\frac{5}{8}\leq\sigma_2<\sigma_1\leq1$.}\eal
\]

Thus, allowing $1<m\leq\frac{4}{3}$ yields:
\begin{equation}\label{T3}|T_3|\leq\epsilon_2c_{11}\int_{P(\sigma_1)}|\nabla(f\psi)|^2dyds+\frac{c_6m^{\frac{m-2}{m-1}}\epsilon_2^{\frac{-1}{m-1}}}{(m-1)(\sigma_1-\sigma_2)^{\frac{m}{m-1}}}\int_{P(\sigma_1)}f^2dyds.\end{equation}

\vspace{1cm} \textbf{Step 5:}
Lastly we work on the inhomogeneous
term of (\ref{Omega/omegatheta}), that is
$\displaystyle\frac{2v_{\theta}}{r^2}\frac{\partial
v_{\theta}}{\partial z}$, which produced the term $T_4$. Recall
$\Lambda=||v_{\theta}||_{L^{\infty}(P_{1,4,1})}\leq
||rv_{0,\theta}||_{L^{\infty}(\reals^3)}<\infty$ and that\\
$\Omegabar_+=\left\{\begin{array}{cc} \Omega+\Lambda & \Omega \geq
0\\\Lambda & \Omega<0\end{array}\right.$, thus
$\Omegabar_+\geq\Lambda$. Also, we have let $f=\Omegabar_+ ^q$.
Using integration by parts yields:
\[\al T_4=&\int_{P(\sigma_1)}\frac{q\Omegabar_+^{2q-1}}{r^2}\frac{\partial
v_{\theta}^2}{\partial z}\psi^2dyds\\
=&-\int_{P(\sigma_1)}\frac{\partial}{\partial z}
\left(\frac{\Omegabar_+^{2q} \psi^2}{\Omegabar_+} \right)
\frac{q}{r^2}
v_{\theta}^2  dyds\\
=& -\int_{P(\sigma_1)}\frac{\partial}{\partial z} (f\psi)^2
\frac{1}{\Omegabar_+}  \frac{q}{r^2} v_{\theta}^2  dyds +
\int_{P(\sigma_1)} (\Omegabar_+^q \psi)^2 \frac{1}{\Omegabar_+^2}
\frac{\partial \Omegabar_+}{\partial z} \frac{q}{r^2} v_{\theta}^2
dyds \\
=& -\int_{P(\sigma_1)}\frac{\partial}{\partial z} (f\psi)^2
\frac{1}{\Omegabar_+}  \frac{q}{r^2} v_{\theta}^2  dyds +
\frac{1}{2} \int_{P(\sigma_1)} \frac{1}{\Omegabar_+}
\left[\frac{\partial (\Omegabar_+^{2q}\psi^2)}{\partial z} -
\Omegabar_+^{2q} \frac{\partial \psi^2}{\partial z}\right]
\frac{1}{r^2} v_{\theta}^2 dyds
\\
=& -\int_{P(\sigma_1)}\frac{\partial}{\partial z} (f\psi)^2
\frac{1}{\Omegabar_+}  \frac{q-(1/2)}{r^2} v_{\theta}^2  dyds -
\frac{1}{2} \int_{P(\sigma_1)} \frac{1}{\Omegabar_+}
\Omegabar_+^{2q} \frac{\partial \psi^2}{\partial z} \frac{1}{r^2}
v_{\theta}^2 dyds.\eal\]

Considering $\frac{|v_{\theta}|}{\Lambda}\leq 1$, utilizing
$\Lambda\leq\Omegabar_+$, and $r=\sqrt{y_1^2+y_2^2}\geq 1$ for all
$y\in P(\sigma_1)$, we continue by fixing $\epsilon_3>0$. Apply
Young's inequality with exponents both being $2$ to get:
\begin{equation}
\label{T4} \al
|T_4|\leq&\int_{P(\sigma_1)}2q|v_{\theta}||f|\psi\left|\frac{\partial(f\psi)}{\partial
z}\right|dyds+ \frac{c_3}{\sigma_1-\sigma_2} \int_{P(\sigma_1)}
f^{2}
|v_{\theta}| dyds\\
\leq &\int_{P(\sigma_1)}
\left(\frac{2q\Lambda}{(2\epsilon_3)^{\frac{1}{2}}}f\psi\right)\times
\left((2\epsilon_3)^{\frac{1}{2}}\frac{\partial(f\psi)}{\partial
z}\right)dyds + \frac{c_3 \Lambda}{\sigma_1-\sigma_2}
\int_{P(\sigma_1)} f^{2}
 dyds\\
\leq &c_{12}\Lambda^2q^2\epsilon_3^{-1}\int_{P(\sigma_1)}f^2dyds
+\epsilon_3\int_{P(\sigma_1)}|\nabla(f\psi)|^2dyds + \frac{c_3
\Lambda}{\sigma_1-\sigma_2} \int_{P(\sigma_1)} f^{2}
 dyds.\eal\end{equation}

\vspace{1cm}
\textbf{$L^2-L^{\infty}$ Estimate:}
An $L^2-L^{\infty}$ bound is derived using Moser's iteration.
Recall inequality (\ref{T1T2T3T4}) from Step 1 and substitute the estimates
for $T_1\ (\ref{T1}),T_2\ (\ref{T2}) ,T_3\ (\ref{T3}),T_4\ (\ref{T4})$, found in Step 2-Step 5 to obtain:
\[
\al\int_{P(\sigma_1)}|&\nabla(f\psi)|^2dyds+\frac{1}{2}\int_{C(\sigma_1)}f^2(y,0)\phi^2(y)dy\\
&\leq\epsilon_1c_4K_b^{\frac{4}{3}}(C_{1,4,1})\int_{P(\sigma_1)}|\nabla(f\psi)|^2dyds+\frac{c_3\epsilon_1^{-3}}{(\sigma_1-\sigma_2)^4}\int_{P(\sigma_1)}f^2dyds\\
&\hspace{.5cm}+\frac{c_5}{(\sigma_1-\sigma_2)^2}\int_{P(\sigma_1)}f^2dyds\\
&\hspace{.5cm}
+\epsilon_2c_{11}\int_{P(\sigma_1)}|\nabla(f\psi)|^2dyds+\frac{c_6m^{\frac{m-2}{m-1}}\epsilon_2^{\frac{-1}{m-1}}}{(m-1)(\sigma_1-\sigma_2)^{\frac{m}{m-1}}}\int_{P(\sigma_1)}f^2dyds\\
&\hspace{.5cm}+\epsilon_3\int_{P(\sigma_1)}|\nabla(f\psi)|^2dyds+c_{12}\Lambda^2q^2\epsilon_3^{-1}\int_{P(\sigma_1)}f^2dyds.
\eal
\]
Choose \[\epsilon_1=\frac{1}{6c_4K_b^{\frac{4}{3}}(C_{1,4,1})},\
\epsilon_2=\frac{1}{6c_{11}},\ \epsilon_3=\frac{1}{6}\] and absorb
the appropriate terms to the left hand side. Then, we have the
following:
\[\al\int_{P(\sigma_1)}|&\nabla(f\psi)|^2dyds+\int_{C(\sigma_1)}f^2(y,0)\phi^2(y)dy\\&\leq
\left(\frac{c_{13}K_b^4(C_{1,4,1})}{(\sigma_1-\sigma_2)^4}+\frac{c_{14}}{(\sigma_1-\sigma_2)^2}+\frac{c_{15}m^{\frac{m-2}{m-1}}}{(m-1)(\sigma_1-\sigma_2)^{\frac{m}{m-1}}}+c_{16}q^2\Lambda^2\right)\int_{P(\sigma_1)}f^2dyds.\eal\]
Consequently,\begin{equation}\label{energyestimate}\al\int_{P(\sigma_1)}|\nabla(f\psi)|^2dyds+\int_{C(\sigma_1)}&f^2(y,0)\phi^2(y)dy\\
&\leq\frac{c_{17}q^2}{(\sigma_1-\sigma_2)^4}\left(K_b^4(C_{1,4,1})+\Lambda^2+1\right)\int_{P(\sigma_1)}f^2dyds.\eal\end{equation}
The last inequality follows with $q=1+\frac{2}{n}>1$ and
$0<\sigma_1-\sigma_2<1$, if $m$ is such that $\frac{m}{m-1}\leq4$.
This implies $m\geq\frac{4}{3},$ but our previous restriction on
$m$ required $1<m\leq\frac{4}{3}$. Thus, we let $m=\frac{4}{3}$
and deduce (\ref{energyestimate}) above.

\vspace{.5cm} \textbf{Moser's Iteration:} We claim that Moser's
iteration process and the estimate (\ref{energyestimate}) together
imply:
\[\sup_{P_{2,3,1}}\Omegabar_+^2\leq c_{21}\left(K_b^4(C_{1,4,1})+\Lambda^2+1\right)^{\frac{5}{2}}\int_{P_{1,4,1}}\Omegabar_+^2dyds.\] H\"{o}lder's inequality and the
Sobolev inequality imply:
\[\al\int_{\reals^n}(f\phi)^{2(1+\frac{2}{n})}dy&\leq\left(\int_{\reals^n}(f\phi)^2dy\right)^{\frac{2}{n}}\left(\int_{\reals^n}(f\phi)^{\frac{2n}{n-2}}dy\right)^{\frac{n-2}{n}}\\
&\leq
c_{18}\left(\int_{\reals^n}(f\phi)^2dy\right)^{\frac{2}{n}}\left(\int_{\reals^n}|\nabla
(f\phi)|^2dy\right).\eal\] Multiply by  the time portion of the
cut-off function to the correct power,
$\eta^{2(1+\frac{2}{n})}(s)$, on both sides and integrate over
time; one can deduce:
\[\int_{-\sigma_1^2}^0\int_{\reals^n}(f\psi)^{2(1+\frac{2}{n})}dyds\leq c_{18}\sup_{-\sigma_1^2\leq s\leq 0}\left(\int_{\reals^n}(f\psi)^2dy\right)^{\frac{2}{n}}\int_{-\sigma_1^2}^0\int_{\reals^n}|\nabla(f\psi)|^2dyds.\]
We use properties of the cut-off to obtain:
\begin{equation}\label{mosercutoff}\int_{P(\sigma_1)}(\psi
f)^{2(1+\frac{2}{n})}dyds\leq c_{18}\left(\sup_{-\sigma_1^2\leq
s<0}\int_{C(\sigma_1)}(f\psi)^2(y,s)dy\right)^{\frac{2}{n}}\int_{P(\sigma_1)}|\nabla(f\psi)|^2dyds.\end{equation}
In fact, with $n=3$ the above is:
\begin{equation}\label{mosern=3}\int_{P(\sigma_1)}(\psi f)^{\frac{10}{3}}dyds\leq c_{18}\left(\sup_{-\sigma_1^2\leq s<0}\int_{C(\sigma_1)}(f\phi)^2(y,s)dy\right)^{\frac{2}{3}}\int_{P(\sigma_1)}|\nabla (f\psi)|^2dyds.\end{equation}
We are noting this here because we will use this later in Section
4. The above argument can be run for each time level
$-\sigma_1^2\leq s< 0$ and in fact (\ref{energyestimate}) holds
for all $s$ in this interval as the upper time limit of the time
cut-off function. Thus, the second to last factor on the right hand side of
inequality (\ref{mosercutoff}) is still controlled by estimate
(\ref{energyestimate}).  So together with the estimate and the
cut-off function again, we get:
\begin{equation}\label{iteration1}\al \int_{P(\sigma_2)}\Omegabar_+^{2q\gamma}dyds&\leq
c_{18}\left(\frac{c_{16}q^2}{\tau^4}\left(K_b^4(C_{1,4,1})+\Lambda^2+1\right)\int_{P(\sigma_1)}\Omegabar_+^{2q}dyds\right)^{\gamma},\eal\end{equation}
where $\gamma=1+\frac{2}{n},\ \tau=\sigma_1-\sigma_2.$

Let $\tau_i=2^{-i-2},\  \sigma_0=1,\
\sigma_i=\sigma_{i-1}-\tau_i=1-\sum_{j=1}^i\tau_j,\ q=\gamma^i$.
Recall $P(\sigma_i)=P_{5-4\sigma_i,4\sigma_i,1}$. Then
(\ref{iteration1}) generalizes to:
\begin{equation}\label{mosergammai}\int_{P(\sigma_{i+1})}\Omegabar_+^{2\gamma^{i+1}}dyds\leq
c_{18}\left(c_{19}^{i+2}\gamma^{2i}\left(K_b^4(C_{1,4,1})+\Lambda^2+1\right)\int_{P(\sigma_i)}\Omegabar_+^{2\gamma^i}dyds\right)^{\gamma},\end{equation}

which, after taking the $\frac{1}{\gamma}$-th power of both sides, implies:
\[\left(\int_{P(\sigma_{i+1})}\Omegabar_+^{2\gamma^{i+1}}dyds\right)^{\frac{1}{\gamma}}\leq
c_{18}^{\frac{1}{\gamma}}\left(c_{19}^{i+2}\gamma^{2i}\left(K_b^4(C_{1,4,1})+\Lambda^2+1\right)\int_{P(\sigma_i)}\Omegabar_+^{2\gamma^i}dyds\right).\]

After iterating the above process, that is, using
(\ref{mosergammai}) on the integral on the left and raising both
sides to the $\frac{1}{\gamma}$-th power repeatedly, one obtains:
\[
\al
\left(\int_{P(\sigma_{i+1})}\right.&\left.\Omegabar_+^{2\gamma^{i+1}}dyds\right)^{\gamma^{-i-1}}\\
&\leq c_{18}^{\sum\gamma^{-j}}c_{19}^{\sum (j+1)\gamma
^{-j+1}}\gamma^{2\sum (j-1)\gamma^{-j+1}}\left(K_b^4(C_{1,4,1})+
\Lambda^2+1\right)^{\sum\gamma^{-j+1}}\int_{P_{1,4,1}}\Omegabar_+^2dyds.
\eal\] Note the sums in the exponents are all from $j=1$ to $j=i+1$.
Let $i\rightarrow\infty$. All the exponent series converge.  In
particular, the series in the exponent for
$\left(K_b^4(C_{1,4,1})+\Lambda^2+1\right)$ converges to
$\frac{5}{2}$. Note also that $\sigma_i\rightarrow\frac{3}{4}$, and
so:
\begin{equation}\label{estforOmega+}\sup_{P_{2,3,1}}\Omegabar_+^2\leq
c_{20}\left(K_b^4(C_{1,4,1})+\Lambda^2+1\right)^{\frac{5}{2}}\int_{P_{1,4,1}}\Omegabar_+^2dyds.\end{equation}

Next, repeating the argument on
$\Omegabar_-=\left\{\begin{array}{cc}-\Omega+\Lambda &
\Omega\leq0\\
\Lambda&\Omega>0\end{array}\right.$ yields:
\[\sup_{P_{2,3,1}}\Omegabar_-^2\leq
c_{20}\left(K_b^4(C_{1,4,1})+\Lambda^2+1\right)^{\frac{5}{2}}\int_{P_{1,4,1}}\Omegabar_-^2dyds.\]
Recall
\[\begin{array}{cc}\Omegabar_+=\left\{\begin{array}{cc}\Omega+\Lambda&\Omega\geq 0\\
\Lambda & \Omega<0\end{array}\right. & \Omegabar_-=\left\{\begin{array}{cc}-\Omega+\Lambda&\Omega\leq 0\\
\Lambda & \Omega>0\end{array}\right.\\
\ &\ \\\Omega=\Omegabar_+-\Omegabar_- &
\Lambda=\|v_{\theta}\|_{L^{\infty}(P_{1,4,1})}\leq
\|rv_{0,\theta}\|_{L^{\infty}(\reals^3)}\end{array}\] Thus,
\[\al\sup_{P_{2,3,1}}\Omega^2&\leq \sup_{P_{2,3,1}}\left(\Omegabar_+-\Omegabar_-\right)^2\\
&\leq
c_{20}\left(K_b^4(C_{1,4,1})+\Lambda^2+1\right)^{\frac{5}{2}}\sup_{P_{2,3,1}}\left(\Omegabar_+^2+\Omegabar_-^2\right)\\
&\leq
c_{20}\left(K_b^4(C_{1,4,1})+\Lambda^2+1\right)^{\frac{5}{2}}\left(\int_{P_{1,4,1}}\Omegabar_+^2dyds+\int_{P_{1,4,1}}\Omegabar_-^2dyds\right)\\
&\leq
c_{20}\left(K_b^4(C_{1,4,1})+\Lambda^2+1\right)^{\frac{5}{2}}\left(\int_{\{\Omega\geq0\}}(\Omega+\Lambda)^2dyds+\int_{\{\Omega<0\}}\Lambda^2dyds\right.\\
&\hspace{6cm}\left.+\int_{\{\Omega\leq0\}}(-\Omega+\Lambda)^2dyds+\int_{\{\Omega>0\}}\Lambda^2dyds\right)\\
&\leq
c_{20}\left(K_b^4(C_{1,4,1})+\Lambda^2+1\right)^{\frac{5}{2}}\left(\int_{P_{1,4,1}}(\Omega+\Lambda)^2+(-\Omega+\Lambda)^2+2\Lambda^2dyds\right)\\
&=c_{20}\left(K_b^4(C_{1,4,1})+\Lambda^2+1\right)^{\frac{5}{2}}\left(2\int_{P_{1,4,1}}\Omega^2dyds+4\int_{P_{1,4,1}}\Lambda^2dyds\right)\\
&\leq
c_{21}\left(K_b^4(C_{1,4,1})+\Lambda^2+1\right)^{\frac{5}{2}}\left(\|\Omega\|_{L^2(P_{1,4,1})}^2+\Lambda^2\right).\eal\]

\vspace{.5cm}
\textbf{Re-scaling:} We now recall
that we omitted the "tildes" in the notation in the above
computations.  So what has actually been proven thus far is:
\[\sup_{(\~{x},\~{t})\in P_{2,3,1}}\~{\Omega}^2(\~{x},\~{t})\leq c_{21}\left(K^4_{\~{b}}(C_{1,4,1})+\~{\Lambda}^2+1\right)^{\frac{5}{2}}\left(\|\~{\Omega}\|_{L^2(P_{1,4,1})}^2+\~{\Lambda}^2\right).\]

 Recall $\~{x}=\frac{x}{k},\
\~{t}=\frac{t}{k^2},\
\~{\Omega}(\~{x},\~{t})=\frac{\widetilde{\omega_{\theta}}(\~{x},\~{t})}{\~{r}}$.
So with $2\leq\~{r}\leq 3$ on the left and $1\leq\~{r}\leq 4$ on
the right we can derive:
\[\sup_{(\~{x},\~{t})\in P_{2,3,1}}\~{\omega}^2_{\theta}(\~{x},\~{t})\leq c_{22}\left(K_{\~{b}}^4(C_{1,4,1})+\~{\Lambda}^2+1\right)^{\frac{5}{2}}\left(\int_{P_{1,4,1}}\~{\omega}^2_{\theta}(\~{x},\~{t})d\~{x}d\~{t}+\~{\Lambda}^2\right).\]
We recall from the Section 2 Preliminaries :
\[
K_{\~{b}}(C_{1,4,1})=\|\~{b}(\~{x},\~{t})\|_{L^{\infty}(-1,0;L^2(C_{1,4,1}))}=\frac{1}{k^{\frac{1}{2}}}\|b(x,t)\|_{L^{\infty}{(-k^2,0;L^2(C_{1,4,k}))}}\]
and
\[\|\~{\omega}(\~{x},\~{t})\|_{L^2(P_{1,4,1})}=\frac{1}{k^{\frac{1}{2}}}\|\omega(x,t)\|_{L^2(P_{1,4,k})}.\]
Also we note the control on $\Lambda$ is a scaling invariant
quantity. Since $\Lambda=\|v_{\theta}\|_{L^{\infty}(P_{1,4,1})}$, we
use Proposition \ref{chaeandleeprop}:
\[\al \~{\Lambda}&=\left(\sup_{P_{1,4,1}}|\~{v_{\theta}}(\~{x},\~{t})|\right)\\
&\leq \left(||\~{r}\~{v_{\theta}}(\~{x},-T)\|_{L^{\infty}(\reals^3)}\right) \hspace{1cm}\textrm{applying Proposition \ref{chaeandleeprop},}\\
&=\|rv_{\theta}(x,-T)\|_{L^{\infty}(\reals^3)}\\
&=\|rv_{0,\theta}\|_{L^{\infty}(\reals^3)}.\eal\]

We utilize $0<k<1$ to obtain:
\[\al&\sup_{(x,t)\in P_{2,3,k}}k^4\omega ^2_{\theta}(x,t)\\&\leq
c_{22}\left(\frac{1}{k^2}\|b\|_{L^{\infty}(-k^2,0;L^2(C_{1,4,k}))}^4+\|rv_{0,\theta}\|_{L^{\infty}(\reals^3)}^2\right)^{\frac{5}{2}}\left(\int_{P_{1,4,k}}k^4\omega_{\theta}^2(x,t)\frac{1}{k^5}dxdt+\|rv_{0,\theta}\|_{L^{\infty}(\reals^3)}^2\right)\\
&\leq
\frac{c_{23}}{k^6}\left(\|b\|_{L^{\infty}(-k^2,0;L^2({C_{1,4,k}}))}^2+k\|rv_{0,\theta}\|_{L^{\infty}(\reals^3)}\right)^5\left(\|\omega_{\theta}\|_{L^2(P_{1,4,k})}^2+k\|rv_{0,\theta}\|_{L^{\infty}(\reals^3)}^2\right).\\
\eal\] Therefore,
\[\al
\|\omega_{\theta}(x,t)&\|_{L^{\infty}(P_{2,3,k})}\\
&\leq\frac{c_{24}}{k^{5}}\left(\|b\|_{L^{\infty}(-k^2,0;L^2(C_{1,4,k}))}^2+k\|rv_{0,\theta}\|_{L^{\infty}(\reals^3)}\right)^{\frac{5}{2}}\left(\|\omega_{\theta}\|_{L^2(P_{1,4,k})}+\sqrt{k}\|rv_{0,\theta}\|_{L^{\infty}(\reals^3)}\right).\eal\]
This proves part (i) of Theorem \ref{ourtheorem}.

Note, the way the cubes on the left and right are related is that
on the right, we have $\frac{1}{2}$ of the inner radius and
$\frac{4}{3}$
of the outer radius. %So in the next section, since we want the
%point-wise bound over $P_{1,4,1}$, we will have $\omega_{\theta}$
%bounded by it's $L^2$ norm on the cube
%$P_{\frac{1}{2},\frac{16}{3},1}$.
\end{section}

\begin{section}{A priori bounds for $\omega_r$ and $\omega_z$}
In this section we use the a priori bound established in part (i)
of Theorem 1.1 (ie. $|\omega_{\theta}|\leq\frac{B_1}{r^5}$) and
the $2\times 2$ system below, which consists of the two remaining
curl equations noted before, to derive a priori bounds for
$\omega_r$ and $\omega_z$.
\begin{equation}\label{2x2system}\left\{\al&\Delta\omega_r-(b\cdot\nabla)\omega_r+\omega_r\left(\frac{\partial v_r}{\partial
r}-\frac{1}{r^2}\right)+\omega_z\frac{\partial v_r}{\partial
z}-\frac{\partial
\omega_r}{\partial t}=0,\\
&\Delta\omega_{z}-(b\cdot\nabla)\omega_{z}+\omega_z\frac{\partial
v_z}{\partial z}+\omega_{r}\frac{\partial v_z}{\partial
r}-\frac{\partial \omega_{z}}{\partial
t}=0.\eal\right.\end{equation} The drift term, $b\cdot\nabla$ can
be dealt with in a similar manner to that in Section 3.  The main
work is to treat the potential terms where
$\displaystyle\frac{\partial v_r}{\partial r}-\frac{1}{r^2},\
\frac{\partial v_r}{\partial z},\ \frac{\partial v_z}{\partial
r},\ \frac{\partial v_z}{\partial r}$ are regarded as potentials.
It turns out one can control the $L^{\frac{10}{3}}$ norm of these
using the a priori bound on $\omega_{\theta}$ established in part
(i) of Theorem \ref{ourtheorem} and the a priori bound on
$v_{\theta}$ from Proposition \ref{chaeandleeprop}. These
$L^{\frac{10}{3}}$ bounds are sufficient to prove part (ii) of
Theorem \ref{ourtheorem}.

We need two lemmas which are localized versions of Lemma 2 and
Lemma 3 in \cite{NP}, and very similar, also, to Lemma 3 in
\cite{CL}. Both should be known, but the proofs are short and are
included here for completeness.  First we recall our notation,
$C_{A,B,R}=\{(r,\theta,z)|\ AR\leq r\leq BR,\ 0\leq\theta\leq
2\pi,\ |z|\leq BR\}\subset\reals^3$, and
$P_{A,B,R}=C_{A,B,R}\times(-R^2,0)$.

\vspace{.5cm}\begin{lemma}\label{lemma1}Let $v\in
C^{\infty}(C_{1,4,1})$ be a vector field. Then, for all $q>1$, there
exists a constant, $c(q)>0$, such that\[\|\nabla v\|_{L^q
(C_{2,3,1})}\leq c(q)\left(\|\Curl v\|_{L^q(C_{1,4,1})}+\|\Div
v\|_{L^q(C_{1,4,1})}+\|v\|_{L^q(C_{1,4,1})}\right).\]
\end{lemma}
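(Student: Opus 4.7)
The plan is to reduce to the classical unlocalized div--curl inequality on $\reals^3$ by a cutoff argument. The starting point is the pointwise identity
\[
-\Delta v = \Curl \Curl v - \nabla \Div v,
\]
which, applied componentwise to a compactly supported smooth vector field $w$ on $\reals^3$, gives $\partial_j w_i$ as a composition of Riesz transforms acting on components of $\Curl w$ and $\Div w$. Since such compositions are $L^q$-bounded for every $1<q<\infty$ (Calder\'on--Zygmund), one obtains the well-known global estimate
\[
\|\nabla w\|_{L^q(\reals^3)} \leq c(q)\bigl(\|\Curl w\|_{L^q(\reals^3)} + \|\Div w\|_{L^q(\reals^3)}\bigr), \qquad w\in C^\infty_c(\reals^3).
\]
This is the only nontrivial analytic input; I would simply quote it.

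To localize, I would fix a smooth cutoff $\phi\in C^\infty_c(\reals^3)$ with $\phi\equiv 1$ on $C_{2,3,1}$, $\supp\phi \subset \operatorname{int} C_{1,4,1}$, and $\|\nabla\phi\|_{L^\infty}\leq c$. Such a $\phi$ exists because the inner radius, outer radius, and height bounds of $C_{2,3,1}$ are all strictly interior to those of $C_{1,4,1}$. Set $w:=\phi v$ and extend by zero to $\reals^3$; then $w\in C^\infty_c(\reals^3)$, and by the product rule
\[
\Curl(\phi v) = \phi \Curl v + \nabla\phi \times v, \qquad \Div(\phi v) = \phi \Div v + \nabla\phi \cdot v.
\]

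Applying the global estimate to $w=\phi v$ yields
\[
\|\nabla(\phi v)\|_{L^q(\reals^3)} \leq c(q)\bigl(\|\phi \Curl v\|_{L^q(C_{1,4,1})} + \|\phi \Div v\|_{L^q(C_{1,4,1})} + \||\nabla\phi|\,|v|\|_{L^q(C_{1,4,1})}\bigr).
\]
Since $\phi\equiv 1$ on $C_{2,3,1}$, the left-hand side dominates $\|\nabla v\|_{L^q(C_{2,3,1})}$; since $0\le \phi\le 1$ and $|\nabla\phi|\le c$ on $C_{1,4,1}$, the right-hand side is bounded by $c(q)(\|\Curl v\|_{L^q(C_{1,4,1})} + \|\Div v\|_{L^q(C_{1,4,1})} + \|v\|_{L^q(C_{1,4,1})})$. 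This gives the claim. There is no real obstacle; the entire argument is a cutoff manipulation on top of a standard Riesz-transform estimate.
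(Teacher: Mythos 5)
Your proof is correct and follows essentially the same route as the paper: cut off with $\phi$ supported in $C_{1,4,1}$ and equal to $1$ on $C_{2,3,1}$, apply the standard global inequality $\|\nabla w\|_{L^q}\leq c(q)(\|\Curl w\|_{L^q}+\|\Div w\|_{L^q})$ to $w=\phi v$, and expand $\Curl(\phi v)$ and $\Div(\phi v)$ by the product rule. The only difference is cosmetic: you sketch the Riesz-transform justification of the global estimate, whereas the paper simply quotes it as well known.
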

\begin{proof}Define $\phi$ to be a cut-off function such that $\phi\in C_0^{\infty}(C_{1,4,1}),\ 0\leq\phi\leq1,\ \phi=1$ in $C_{2,3,1},\ |\nabla\phi|\leq c_1$, a constant. Then $v\phi$ is compactly supported, and it is well known that:
\begin{equation}\label{nplemma}\|\nabla (v\phi)\|_{L^q(C_{1,4,1})}\leq c(q)\left(\|\Curl (v\phi)\|_{L^q(C_{1,4,1})}+\|\Div (v\phi)\|_{L^q(C_{1,4,1})}\right).\end{equation}
(This is sometimes called the Helmholtz or Hodge decomposition). Next note\[\al\Div(v\phi)&=\Div v\ \phi+v\cdot\nabla\phi\hspace{1cm}\textrm{ and }\\
\Curl(v\phi)&=\Curl v\ \phi+\nabla\phi\times v.\eal\] The lemma
follows by substituting the last two identities into the right
hand side of (\ref{nplemma}) and using the Minkowski inequality
and properties of the cutoff function.
\end{proof}

 The following lemma is a generalization of Lemma 3 in \cite{NP}.
\begin{lemma}\label{lemma2}Let $v=v(x,t)$ be a
divergence free, axis symmetric, smooth vector field in
$Q_{1,4}=C_{1,4,1}\times[-T,T]$ for fixed $T>0$.  Then, for all
$q>1$, there exists a constant, $c=c(q)>0$, such that
\[\al\|\nabla
v_r\|_{L^q(Q_{2,3})}+\left\|\frac{v_r}{r}\right\|_{L^q(Q_{2,3})}+\|\nabla
v_z&\|_{L^q(Q_{2,3})}\\&\leq c(q)\left(\|(\Curl
v)_{\theta}\|_{L^q(Q_{1,4})}+\|v\|_{L^q(Q_{1,4})}\right).\eal\]
\end{lemma}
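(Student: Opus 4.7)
The plan is to deduce Lemma 4.2 from the fixed-time Lemma 4.1 by applying Lemma 4.1 to the axisymmetric vector field $u:=v_r\->{e_r}+v_z\->{e_z}$, that is, the ``poloidal'' part of $v$, and then integrating in time. First I would verify that $u$ inherits the structure needed to feed into Lemma 4.1: since $v$ is divergence-free and $v_\theta$ is independent of $\theta$, the 3D divergence of $v$ reduces to $\Div u$, so $\Div u=0$. And since $u$ has no $\->{e_\theta}$-component while its two nonzero cylindrical components are $\theta$-independent, the cylindrical curl formula gives $\Curl u=(\partial_z v_r-\partial_r v_z)\->{e_\theta}=\omega_\theta\->{e_\theta}$, so $|\Curl u|=|\omega_\theta|$.

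Next, I would apply Lemma 4.1 to $u(\cdot,t)$ at each fixed $t\in[-T,T]$. Raising to the $q$-th power and using $\Div u=0$ together with the elementary inequality $(a+b)^q\le 2^{q-1}(a^q+b^q)$ yields
\[
\|\nabla u(\cdot,t)\|_{L^q(C_{2,3,1})}^q\le c(q)\bigl(\|\omega_\theta(\cdot,t)\|_{L^q(C_{1,4,1})}^q+\|u(\cdot,t)\|_{L^q(C_{1,4,1})}^q\bigr).
\]
Integrating in $t$ over $[-T,T]$ and then taking $q$-th roots converts this into the same inequality with the spatial $L^q$ norms on $C_{2,3,1}$ and $C_{1,4,1}$ replaced by parabolic $L^q$ norms on $Q_{2,3}$ and $Q_{1,4}$ respectively.

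Finally, I would translate $\|\nabla u\|_{L^q}$ into the three cylindrical quantities on the left of Lemma 4.2. Writing $u$ in Cartesian coordinates as $u=(v_r\cos\theta,v_r\sin\theta,v_z)$ and differentiating componentwise with respect to $x_1,x_2,x_3$, a short computation using the identity $\cos^4\theta+\sin^4\theta+2\sin^2\theta\cos^2\theta=1$ shows that all the cross-terms mixing $\partial_r v_r$ and $v_r/r$ cancel and
\[
|\nabla u|^2=|\partial_r v_r|^2+\left|\frac{v_r}{r}\right|^2+|\partial_z v_r|^2+|\partial_r v_z|^2+|\partial_z v_z|^2.
\]
Hence $\|\nabla u\|_{L^q(Q_{2,3})}$ simultaneously controls each of the three summands on the left of Lemma 4.2, and the bound $|u|\le|v|$ allows one to replace $\|u\|_{L^q(Q_{1,4})}$ by $\|v\|_{L^q(Q_{1,4})}$ on the right. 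The only mildly delicate point is this last pointwise identity, which must produce the geometric term $|v_r/r|^2$ cleanly from the Cartesian derivatives of $\->{e_r}$; once that is in hand, everything else is a routine application of Lemma 4.1 combined with Fubini.
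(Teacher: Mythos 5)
Your proposal is correct and follows the same overall route as the paper: both set $\bar v=v_r\->{e_r}+v_z\->{e_z}$, observe it is divergence-free and axisymmetric with $\Curl\bar v=\omega_\theta\->{e_\theta}$, apply Lemma \ref{lemma1} at each fixed time, and then integrate in $t$. The one place you diverge is in how $\|v_r/r\|_{L^q}$ is extracted: the paper reads it off from the divergence-free constraint $\partial_r v_r+v_r/r+\partial_z v_z=0$, which gives $\|v_r/r\|\le\|\partial_r v_r\|+\|\partial_z v_z\|$, while you instead invoke the pointwise identity $|\nabla\bar v|^2=|\partial_r v_r|^2+|v_r/r|^2+|\partial_z v_r|^2+|\partial_r v_z|^2+|\partial_z v_z|^2$, which I have checked and which makes $|v_r/r|\le|\nabla\bar v|$ immediate. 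Your variant is slightly cleaner in that it does not reuse the divergence-free condition at that step, but both are valid and the argument is otherwise the same.
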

\begin{proof}In the cylindrical coordinate system, for an axis symmetric vector field, $\Div v=0$ means
\[\frac{\partial v_r}{\partial r}+\frac{v_r}{r}+\frac{\partial v_z}{\partial z}=0.\]
Therefore the vector field:
\[\bar{v}=v_r\->{e_r}+v_z\->{e_z}\]
is still divergence free.
Since the inequality we want to prove does not involve $v_{\theta}$, we first work on $\bar{v}$
where $v_{\theta}$ is not involved. Also $\bar{v}$ is axis symmetric, and so $\Curl \bar{v}$
has only one nonzero component, the one in the direction of $\overrightarrow{e_{\theta}}$.
This is because for axis symmetric vector fields:
\[\al\omega(x,t)=\omega_r\->{e_r}+\omega_{\theta}&\->{e_{\theta}}+\omega_z\->{e_z}\\
\ &\ \\
\omega_r=-\frac{\partial v_{\theta}}{\partial z},\ \omega_{\theta}=\frac{\partial v_r}{\partial z}-\frac{\partial v_z}{\partial r}&,\ \omega_z=\frac{\partial v_{\theta}}{\partial r}+\frac{v_{\theta}}{r}.\eal\]
Thus,
\[\Curl \bar{v}=(\Curl \bar{v})_{\theta}\->{e_{\theta}}.\]
Applying Lemma \ref{lemma1} on $\bar{v}$, we deduce, for any fixed $t$:
\[\al\|\nabla\bar{v}(\cdot,t)\|_{L^q(C_{2,3,1})}&\leq c(q)\left(\|\Curl \bar{v}(\cdot,t)\|_{L^q(C_{1,4,1})}+\|\bar{v}(\cdot,t)\|_{L^q(C_{1,4,1})}\right)\\
&=c(q)\left(\|(\Curl\bar{v})_{\theta}(\cdot,t)\|_{L^q(C_{1,4,1})}+\|\bar{v}(\cdot,t)\|_{L^q(C_{1,4,1})}\right).\eal\]
Note $\displaystyle(\Curl v)_{\theta}=\frac{\partial v_r}{\partial z}-\frac{\partial v_z}{\partial r}=(\Curl \bar{v})_{\theta}$, and so:
\[\|\nabla\bar{v}(\cdot,t)\|_{L^q(C_{2,3,1})}\leq c(q)\left(\|(\Curl v)_{\theta}(\cdot,t)\|_{L^q(C_{1,4,1})}+\|v(\cdot,t)\|_{L^q(C_{1,4,1})}\right).\]
Thus,\begin{equation}\label{afterbar}\al\|\nabla
v_r(\cdot,t)\|_{L^q(C_{2,3,1})}+\|\nabla
&v_z(\cdot,t)\|_{L^q(C_{2,3,1})}+\left\|\frac{v_r(\cdot,t)}{r}\right\|_{L^q(C_{2,3,1})}\\&\leq
c(q)\left(\|(\Curl
v)_{\theta}(\cdot,t)\|_{L^q(C_{1,4,1})}+\|v(\cdot,t)\|_{L^q(C_{1,4,1})}\right).\eal\end{equation}
Here,
$\displaystyle\left\|\frac{v_r(\cdot,t)}{r}\right\|_{L^q(C_{2,3,1})}$
is bounded due to the inequality:
\[\left\|\frac{v_r(\cdot,t)}{r}\right\|_{L^q(C_{2,3,1})}\leq \left\|\frac{\partial v_r(\cdot,t)}{\partial r}\right\|_{L^q(C_{2,3,1})}+\left\|\frac{\partial v_z(\cdot,t)}{\partial z}\right\|_{L^q(C_{2,3,1})},\]
which comes from the divergence free equation.  Taking the q-th
power on (\ref{afterbar}) and integrating in time, we deduce the
lemma.
\end{proof}

Taking $q=\frac{10}{3}$ in Lemma \ref{lemma2} yields the following
Proposition:
\begin{proposition}\label{lemmawith10/3}
For $v$, a smooth, axis symmetric solution to the Navier-Stokes
equations in $Q_{1,4}$, then there exists a constant $c_1>0$ such
that:
\[ \al\|\nabla
v_r\|_{L^{\frac{10}{3}}(Q_{2,3})}+\left\|\frac{v_r}{r}\right\|_{L^{\frac{10}{3}}(Q_{2,3})}+\|\nabla
v_z&\|_{L^{\frac{10}{3}}(Q_{2,3})}\\&\leq
c_1\left(\|\omega_{\theta}\|_{L^{\frac{10}{3}}(Q_{1,4})}+\|v\|_{L^{\frac{10}{3}}(Q_{1,4})}\right).\eal\]\end{proposition}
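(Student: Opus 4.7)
The plan is to observe that this proposition is essentially a direct specialization of Lemma \ref{lemma2} to the exponent $q = \frac{10}{3}$. Since $\frac{10}{3} > 1$, the exponent condition $q > 1$ required by the lemma is satisfied, so I only need to verify that the hypotheses of Lemma \ref{lemma2} hold for $v$ in this setting.

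First I would note that any smooth axis symmetric solution $v$ of the Navier--Stokes equations is automatically divergence free (this is part of the Navier--Stokes system) and smooth on $Q_{1,4}$ by assumption, so $v$ satisfies all the hypotheses of Lemma \ref{lemma2}. Next I would identify the quantity $(\Curl v)_\theta$ appearing in the conclusion of Lemma \ref{lemma2} with $\omega_\theta$: by formula (\ref{curlformulas}), the $\vec{e_\theta}$-component of $\omega = \Curl v$ is exactly $\omega_\theta = \partial_z v_r - \partial_r v_z$, so $(\Curl v)_\theta = \omega_\theta$.

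With these two observations, applying Lemma \ref{lemma2} with $q = \frac{10}{3}$ immediately yields
\[
\|\nabla v_r\|_{L^{10/3}(Q_{2,3})} + \left\|\frac{v_r}{r}\right\|_{L^{10/3}(Q_{2,3})} + \|\nabla v_z\|_{L^{10/3}(Q_{2,3})} \leq c_1\left(\|\omega_\theta\|_{L^{10/3}(Q_{1,4})} + \|v\|_{L^{10/3}(Q_{1,4})}\right),
\]
where $c_1 = c(10/3)$ is the constant provided by the lemma. Since the previous lemmas already do all the heavy lifting (the Hodge-type elliptic estimate in Lemma \ref{lemma1} and the reduction to $\bar v = v_r \vec{e_r} + v_z \vec{e_z}$ in Lemma \ref{lemma2}), there is no genuine obstacle here; the proposition is just recording the particular exponent $q=\frac{10}{3}$ that will be convenient in Section 4, presumably because $\frac{10}{3}$ is the Sobolev-parabolic exponent appearing in the Moser iteration estimate (\ref{mosern=3}) from Section 3 that will be combined with this bound to control the potential terms in the system (\ref{2x2system}).
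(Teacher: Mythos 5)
Your proposal is correct and matches the paper exactly: the paper introduces this proposition with the single line ``Taking $q=\frac{10}{3}$ in Lemma \ref{lemma2} yields the following Proposition,'' which is precisely your argument of specializing the lemma to $q=\frac{10}{3}$ after noting that a Navier--Stokes solution is divergence free and that $(\Curl v)_{\theta}=\omega_{\theta}$. Your added remarks on why the exponent $\frac{10}{3}$ is the convenient one for Section 4 are consistent with the paper's later use of the estimate.
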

The right hand side is a priori bounded due to standard energy
estimates and our Theorem \ref{ourtheorem} (i).

\vspace{.5cm} \textbf{Proof of Theorem \ref{ourtheorem} (ii)}:

 We use the scaling invariance of (\ref{2x2system}) and do the
analysis in $P_{1,4,1}\subset Q_{1,4}$. We let $V$ be the matrix:
\[V=\left[\begin{array}{cc}
\frac{\partial v_r}{\partial r}-\frac{1}{r^2} & \frac{\partial v_z}{\partial r}\\
\ &\ \\
\frac{\partial v_r}{\partial z} & \frac{\partial v_z}{\partial z}
\end{array}\right],\]which can be regarded as a potential in the system when we take the equations together. Proposition \ref{lemmawith10/3} shows $V\in
L^{\frac{10}{3}}(P_{1,4,1}).$ This, along with our analysis on the
drift term $b$ as before implies, by a similar argument to that in
Section 3, that $\omega_r$ and $\omega_z$ are also a priori
bounded. Again, scaling, and in particular the scaling of
$\|V\|_{L^{\frac{10}{3}}(P_{1,4,1})}$, will come into play.

We let $q\geq1$ be a rational number and choose
$\psi=\phi(y)\eta(s)$ to be the same refined cut-off function as
previously defined, satisfying the following:\[\supp\phi \subset
C(\sigma_1);\ \phi(y)=1 \textrm{ for all }\in C(\sigma_2);\
\frac{|\nabla \phi|}{\phi
^{\delta}}\leq\frac{c_2}{\sigma_1-\sigma_2}\textrm{ for }\delta
\in (0,1),\ 0\leq \phi\leq 1;\
\] \[\supp \eta \subset (-\sigma_1^2,0];\
\eta(s)=1 \textrm{ for all } s\in [-\sigma_2^2,0];\ |\eta '|\leq
\frac{c_3}{(\sigma_1-\sigma_2)^2};\ 0\leq \eta \leq 1.\] We start
by using $\omega_r^{2q-1}\psi^2$ as a test function on the first
equation of system (\ref{2x2system}).
\[\al0=&\int_{P(\sigma_1)}\left(\Delta \omega_r-b\cdot\nabla\omega_r+\omega_r\left(\frac{\partial v_r}{\partial r}-\frac{1}{r^2}\right)+\omega_z\frac{\partial v_r}{\partial_z}-\frac{\partial\omega_r}{\partial s}\right)\omega_r^{2q-1}\psi^2dyds\\
=&\int_{P(\sigma_1)}\omega_r^{2q-1}\psi^2\Delta\omega_rdyds\\
&\hspace{1cm}-\int_{P(\sigma_1)}\frac{1}{q}b\cdot\nabla(\omega_r^q)(\omega_r^q\psi^2)dyds-\int_{P(\sigma_1)}\frac{1}{q}\partial_s(\omega_r^q)(\omega_r^q\psi^2)dyds\\
&\hspace{2.5cm}+\int_{P(\sigma_1)}\left(\frac{\partial
v_r}{\partial
r}-\frac{1}{r^2}\right)(\omega_r^{2q}\psi^2)+\left(\frac{\partial
v_r}{\partial z}\right)\omega_z\omega_r^{2q-1}\psi^2dyds.\eal\]
We work on the first term on the right hand side, using integration
by parts, as usual, direct calculations, and algebraic
manipulations:
\[\al\int_{P(\sigma_1)}\omega_r^{2q-1}\psi^2\Delta\omega_rdyds&=-\int_{P(\sigma_1)}\nabla(\omega_r^{2q-1}\psi^2)\cdot\nabla\omega_rdyds\\
&=-\int_{P(\sigma_1)}(2q-1)(\omega_r^{2q-2}\nabla\omega_r)\cdot\nabla\omega_r\psi^2+\omega^{2q-1}\nabla\omega_r\cdot\nabla(\psi^2)dyds\\
&=-\int_{P(\sigma_1)}(2q-1)(\omega_r^{q-1}\nabla\omega_r)\cdot(\omega_r^{q-1}\nabla\omega_r)\psi^2+\nabla(\psi^2)\omega_r^q(\omega_r^{q-1}\nabla\omega_r)dyds\\
&=-\frac{2q-1}{q^2}\int_{P(\sigma_1)}\nabla(\omega_r^q)\cdot\nabla(\omega_r^q)\psi^2dyds-\frac{1}{q}\int_{P(\sigma_1)}\omega_r^q\nabla(\omega_r^q)\cdot\nabla(\psi^2)dyds\\
&\leq -\frac{1}{q}\int_{P(\sigma_1)}\nabla(\omega_r^q)\cdot\left(\nabla(\omega_r^q)\psi^2+\nabla(\psi^2)\omega_r^q\right)dyds,\hspace{1.cm}\textrm{ since $\frac{1}{q}<\frac{2q-1}{q^2}$,}\\
&=-\frac{1}{q}\int_{P(\sigma_1)}\nabla(\omega_r^q)\cdot\nabla(\omega_r^q\psi^2)dyds\\
&=-\frac{1}{q}\int_{P(\sigma_1)}\left(|\nabla(\omega_r^q\psi)|^2-|\nabla\psi|^2\omega_r^{2q}\right)dyds.\eal\]
This implies:
\begin{equation}\label{testforomegar}\al\int_{P(\sigma_1)}&|\nabla(\omega_r^q\psi)|^2dyds\\&\leq-\int_{P(\sigma_1)}b\cdot\nabla(\omega_r^q)(\omega_r^q\psi^2)dyds-\int_{P(\sigma_1)}\partial_s(\omega_r^q)(\omega_r^q\psi^2)dyds+\int_{P(\sigma_1)}|\nabla\psi|^2\omega_r^{2q}dyds\\
&\hspace{1cm}+q\int_{P(\sigma_1)}\left[\left(\frac{\partial
v_r}{\partial
r}-\frac{1}{r^2}\right)(\omega_r^{2q}\psi^2)+\left(\frac{\partial
v_r}{\partial
z}\right)\omega_z\omega_r^{2q-1}\psi^2\right]dyds.\eal\end{equation}

Similarly, we use $\omega_z^{2q-1}\psi^2$ as a test function in the second equation in system (\ref{2x2system}) to arrive at:
\begin{equation}\label{testforomegaz}\al\int_{P(\sigma_1)}&|\nabla(\omega_z^q\psi)|^2dyds\\&\leq-\int_{P(\sigma_1)}b\cdot\nabla(\omega_z^q)(\omega_z^q\psi^2)dyds-\int_{P(\sigma_1)}\partial_s(\omega_z^q)(\omega_z^q\psi^2)dyds+\int_{P(\sigma_1)}|\nabla\psi|^2\omega_z^{2q}dyds\\
&\hspace{1cm}+q\int_{P(\sigma_1)}\left[\left(\frac{\partial
v_z}{\partial z}\right)(\omega_z^{2q}\psi^2)+\left(\frac{\partial
v_z}{\partial
r}\right)\omega_r\omega_z^{2q-1}\psi^2\right]dyds.\eal\end{equation}

We let $f=|\omega_r|^q+|\omega_z|^q$ and $V$ represent the matrix:
\[V=\left[\begin{array}{cc}
\frac{\partial v_r}{\partial r}-\frac{1}{r^2} & \frac{\partial v_z}{\partial r}\\
\ &\ \\
\frac{\partial v_r}{\partial z} & \frac{\partial v_z}{\partial z}
\end{array}\right].\]

We add (\ref{testforomegar}) and (\ref{testforomegaz}) and apply
Cauchy-Schwartz inequality to the term involving $V$ to obtain:
\[\int_{P(\sigma_1)}|\nabla (f\psi)|^2dyds\leq2\int_{P(\sigma_1)}\left(-b\cdot\nabla f(f\psi^2)-\partial_s f(f\psi^2)+|\nabla\psi|^2f^2+qc_5|V|f^2\psi^2\right)dyds.\]
Here $|V|$ is the max norm of the matrix.  We proceed just as in
the end of Step 1 in Section 3 to reach:
\begin{equation}\label{T1T2T3}\al\int_{P(\sigma_1)}|\nabla(f\psi)|^2&dyds+\frac{1}{2}\int_{C(\sigma_1)}f^2(y,0)\phi^2(y)dy\\
&\leq-\int_{P(\sigma_1)}2b\cdot\nabla f(f\psi^2)dyds+2\int_{P(\sigma_1)}(\eta\partial_s\eta+|\nabla\psi|^2)f^2dyds\\
&\hspace{3cm}+c_4q\int_{P(\sigma_1)}|V|f^2\psi^2dyds\\&:=T_1+T_2+T_3.\eal\end{equation}

Terms $T_1$ and $T_2$ are in the same form as  to $T_1$ and $T_2$
in (\ref{T1T2T3T4}) of Section 3. Therefore, they are treated in
an identical manner as found there. We recall the estimates on
those terms now( see (\ref{T1}) and (\ref{T2})):
\begin{equation}\label{T1'}|T_1|\leq
\epsilon_1c_5K_b^{\frac{4}{3}}(C_{1,4,1})\int_{P(\sigma_1)}|\nabla(f\psi)|^2dyds+\frac{c_6\epsilon_1^{-3}}{(\sigma_1-\sigma_2)^4}\int_{P(\sigma_1)}f^2dyds\end{equation}
\begin{equation}\label{T2'}
|T_2|\leq
\frac{c_7}{(\sigma_1-\sigma_2)^2}\int_{P(\sigma_1)}f^2dyds.\end{equation}
We proceed to term $T_3$ involving the matrix constructed from the
potential terms in system (\ref{2x2system}). We employ
H\"{o}lder's inequality twice here:
\[\al T_3&=c_4q\int_{P(\sigma_1)}|V|(f\psi)^2dyds\\
&\leq c_4q\left(\int_{P(\sigma_1)}|V|^{\frac{10}{3}}dyds\right)^{\frac{3}{10}}\left(\int_{P(\sigma_1)}\left((f\psi)^2\right)^{\frac{10}{7}}dyds\right)^{\frac{7}{10}}\\
&=c_4q\|V\|_{L^{\frac{10}{3}}(P(\sigma_1))}\left(\int_{P(\sigma_1)}(f\psi)^{\frac{20}{7}}dyds\right)^{\frac{7}{10}}\\
&\leq c_4q\|V\|_{L^{\frac{10}{3}}(P_{1,4,1})}\left(\int_{P(\sigma_1)}(f\psi)^{\frac{20}{7}-a}(f\psi)^adyds\right)^{\frac{7}{10}}\hspace{2cm}0<a<\frac{20}{7}\\
&=c_4q\|V\|_{L^{\frac{10}{3}}(P_{1,4,1})}\left(\int_{P(\sigma_1)}(f\psi)^{(\frac{20}{7}-a)p}dyds\right)^{\frac{7}{10p}}\left(\int_{P(\sigma_1)}(f\psi)^{ap'}dyds\right)^{\frac{7}{10p'}}\eal\]
for $1<p,p'<\infty,\ \frac{1}{p}+\frac{1}{p'}=1$. If
$\left(\frac{20}{7}-a\right)p=\frac{10}{3}$ and $ap'=2$, then
$p=\frac{14}{9}$ and $p'=\frac{14}{5}$ and we get:
\[\al T_3&\leq c_4q\|V\|_{L^{\frac{10}{3}}(P_{1,4,1})}\left(\int_{P(\sigma_1)}(f\psi)^\frac{10}{3}dyds\right)^{\frac{9}{20}}\left(\int_{P(\sigma_1)}(f\psi)^2dyds\right)^{\frac{1}{4}}.\eal\]
We apply Young's inequality with exponents $\frac{4}{3}$ and $4$:
\begin{equation}\label{T3'}\al T_3&\leq \left[\left(\frac{4\epsilon_2}{3}\right)^{\frac{3}{4}}\left(\int_{P(\sigma_1)}(f\psi)^\frac{10}{3}dyds\right)^{\frac{9}{20}}\right]\times\\
&\hspace{5cm}\left[c_4q\left(\frac{4\epsilon_2}{3}\right)^{-\frac{3}{4}}\|V\|_{L^{\frac{10}{3}}(P_{1,4,1})}\left(\int_{P(\sigma_1)}(f\psi)^2dyds\right)^{\frac{1}{4}}\right]\\
&\leq\epsilon_2\left(\int_{P(\sigma_1)}((f\psi)^2)^{\frac{5}{3}}dyds\right)^{\frac{3}{5}}+c_8q^4\epsilon_2^{-3}\|V\|_{L^{\frac{10}{3}}(P_{1,4,1})}^4\int_{P(\sigma_1)}(f\psi)^2dyds\\
&\leq\epsilon_2\|(f\psi)^2\|_{L^{\frac{5}{3}}(P(\sigma_1))}+c_8q^4\epsilon_2^{-3}\|V\|_{L^{\frac{10}{3}}(P_{1,4,1})}^4\int_{P(\sigma_1)}f^2dyds.\eal\end{equation}
Note, $\|V\|_{L^{\frac{10}{3}}(P_{1,4,1})}$ can be controlled as a
result of Proposition \ref{lemmawith10/3}.

At this time we utilize in (\ref{T1T2T3}) the estimates for $T_1$ (\ref{T1'}), $T_2$
(\ref{T2'}), and $T_3$ (\ref{T3'}), which then
becomes:
\[\al&\int_{P(\sigma_1)}|\nabla(f\psi)|^2dyds+\frac{1}{2}\int_{C(\sigma_1)}f^2(y,0)\phi^2(y)dy\\
&\leq\epsilon_1c_5K_b^{\frac{4}{3}}(C_{1,4,1})\int_{P(\sigma_1)}|\nabla(f\psi)|^2dyds+\frac{c_6\epsilon_1^{-3}}{(\sigma_1-\sigma_2)^4}\int_{P(\sigma_1)}f^2dyds+\frac{c_7}{(\sigma_1-\sigma_2)^2}\int_{P(\sigma_1)}f^2dyds\\
&\hspace{1cm}+\epsilon_2\|(f\psi)^2\|_{L^{\frac{5}{3}}(P(\sigma_1))}+c_8q^4\epsilon_2^{-3}\|V\|_{L^{\frac{10}{3}}(P_{1,4,1})}^4\int_{P(\sigma_1)}f^2dyds.\eal\]
Choose\[\epsilon_1=\frac{1}{2c_5K_b^{\frac{4}{3}}(C_{1,4,1})}\] and
absorb the appropriate term the left hand side.  We arrive at:
\begin{equation}\label{energyestimate'}\al\int_{P(\sigma_1)}&|\nabla(f\psi)|^2dyds+\int_{C(\sigma_1)}f^2(y,0)\phi^2(y)dy\\
&\leq\frac{c_{9}K_b^4(C_{1,4,1})}{(\sigma_1-\sigma_2)^4}\int_{P(\sigma_1)}f^2dyds+\frac{c_{10}}{(\sigma_1-\sigma_2)^2}\int_{P(\sigma_1)}f^2dyds\\
&\hspace{1cm}+2\epsilon_2\|(f\psi)^2\|_{L^{\frac{5}{3}}(P(\sigma_1))}+c_{11}q^4\|V\|_{L^{\frac{10}{3}}(P_{1,4,1})}^4\int_{P(\sigma_1)}f^2dyds\\
&\leq2\epsilon_2\|(f\psi)^2\|_{L^{\frac{5}{3}}(P(\sigma_1))}+\frac{c_{12}q^4}{(\sigma_1-\sigma_2)^4}\left(K_b^4(C_{1,4,1})+\|V\|_{L^{\frac{10}{3}}(P_{1,4,1})}^4+1\right)\int_{P(\sigma_1)}f^2dyds,\eal\end{equation}
noting $0<\sigma_1-\sigma_2<1$ and $q=1+\frac{2}{n}>1$.

 Now,
recall (\ref{mosern=3}) in Moser's iteration in Section 3, which
follows from H\"{o}lder's inequality, the Sobolev inequality,
$n=3$, and properties of the cut-off function.  We have:
\[\int_{P(\sigma_1)}(\psi f)^{\frac{10}{3}}dyds\leq c_{13}\left(\sup_{-1\leq s<0}\int_{C(\sigma_1)}(f(y,s)\phi(y))^2dy\right)^{\frac{2}{3}}\int_{P(\sigma_1)}|\nabla(f\psi)|^2dyds.\]
Apply estimate (\ref{energyestimate'}), as we did in Section 3,
and take the $\frac{3}{5}$ power of both sides:
\[\al\|(\psi f)^2\|_{L^{\frac{5}{3}}(P(\sigma_1))}&\leq
\frac{c_{14}q^4}{(\sigma_1-\sigma_2)^4}\left(K_b^4(C_{1,4,1})+\|V\|_{L^{\frac{10}{3}}(P_{1,4,1})}^4+1\right)\int_{P(\sigma_1)}f^2dyds\\
&\hspace{2cm}+2c_{15}\epsilon_2\|(f\psi)^2\|_{L^{\frac{5}{3}}(P(\sigma_1))}.\eal\]
Choose
\[\epsilon_2=\frac{1}{4c_{15}},\] absorb the appropriate term to the left, take the $\frac{5}{3}$
power of both sides, use the cut-off function, and recall
$f=|\omega_r|^q+|\omega_z|^q$.  We get:
\[\int_{P(\sigma_2)}(|\omega_r|^q+|\omega_z|^q)^{2\gamma}\leq
c_{16}\left[\frac{c_{17}q^4}{\tau^4}\left(K_b^4(C_{1,4,1})+\|V\|_{L^{\frac{10}{3}}(P_{1,4,1})}^4+1\right)\int_{P(\sigma_1)}(|\omega_r|^q+|\omega_z|^q)^2dyds\right]^{\gamma},\]
where $\gamma=1+\frac{2}{n},\ n=3,\ \tau=\sigma_1-\sigma_2.$
Define $h(x,t)=\max(|\omega_r|,|\omega_z|)$ and observe $h^q\leq |\omega_r|^q+|\omega_z|^q\leq2h^q$. And so:
\begin{equation}\label{mosergammair}\int_{P(\sigma_2)}h^{2q\gamma}dyds \leq
c_{16}\left[\frac{c_{18}q^4}{\tau^4}\left(K_b^4(C_{1,4,1})+\|V\|_{L^{\frac{10}{3}}(P_{1,4,1})}^4+1\right)\int_{P(\sigma_1)}h^{2q}dyds\right]^{\gamma}.\end{equation}  Let $\tau_i=2^{-i-2},\ \sigma_0=1,\ \sigma_i=\sigma_{i-1}-\tau_i=1-\sum_{j=1}^i\tau_j,\ q=\gamma^i$. Thus we have
an analogue to (3.6):
\begin{equation}\label{mosergammair}\int_{P(\sigma_{i+1})}h^{2\gamma^{i+1}}dyds \leq
 c_{16}\left[c_{19}^{i+2}\gamma^{4i}\left(K_b^4(C_{1,4,1})+\|V\|_{L^{\frac{10}{3}}(P_{1,4,1})}^4+1\right)\int_{P(\sigma_1)}h^{2\gamma^i}dyds\right]^{\gamma}.\end{equation}
Raising both sides to the $\frac{1}{\gamma}$-th power, we get:
\[\left(\int_{P(\sigma_{i+1})}h^{2\gamma^{i+1}}dyds\right)^{\frac{1}{\gamma}}\leq c_{16}^{\frac{1}{\gamma}}\left[c_{19}^{i+2}\gamma^{4i}\left(K_b^4(C_{1,4,1})+\|V\|_{L^{\frac{10}{3}}(P_{1,4,1})}^4+1\right)\int_{P(\sigma_1)}h^{2\gamma^i}dyds\right].\]
Now we apply (\ref{mosergammair}) to the
integral on the right hand side, with $i$ replaced with $i-1$, to
obtain:
\[\al\left(\int_{P(\sigma_{i+1})}h^{2\gamma^{i+1}}dyds\right)^{\frac{1}{\gamma}}
 &\leq c_{16}^{\frac{1}{\gamma}}\left[c_{19}^{i+2}\gamma^{4i}\left(K_b^4(C{1,4})+\|V\|_{L^{\frac{10}{3}}(P_{1,4,1})}^4+1\right)\right]\times\\
 &\hspace{1cm}2c_{16}\left[c_{19}^{i+2}\gamma^{4i}
 \left(K_b^4(C_{1,4,1})+\|V\|_{L^{\frac{10}{3}}(P_{1,4,1})}^4+1\right)
 \int_{P(\sigma_{i-1})}h^{2\gamma^{i-1}}dyds\right]^{\gamma}.
 \eal
 \]
  Repeat this process and we arrive at:
\[\al\left(\int_{P(\sigma_{i+1})}h^{2\gamma^{i+1}}dyds\right)^{\frac{1}{\gamma^{i+1}}}&\leq
\left(2c_{16}\right)^{\sum\gamma^{-j}}c_{19}^{\sum(j+1)\gamma^{-j+1}}\gamma^{4\sum(j-1)\gamma^{-j+1}}\times\\
&\hspace{1cm}\left(K_b^4(C_{1,4,1})+\|V\|_{L^{\frac{10}{3}}(P_{1,4,1})}^4+1\right)^{\sum\gamma^{-j+1}}\int_{P_{1,4,1}}h^2dyds.\eal\]
Note the sums in the exponents are all from $j=1$ to $j=i+1$. Let
$i\rightarrow\infty$. All the exponent series converge.  In
particular, the series in the exponent for
$\left(K_b^4(C_{1,4,1})+\|V\|_{L^{\frac{10}{3}}(P_{1,4,1})}^4+1\right)$
converges to $\frac{5}{2}$. Note also that
$\sigma_i\rightarrow\frac{3}{4}$. Therefore, we arrive at:
\begin{equation}\label{moserbeforeVomegar}\sup_{P_{2,3,1}}\left(\omega_r^2+\omega_z^2\right)\leq
c_{20}\left(K_b^4(C_{1,4,1})+\|V\|_{L^{\frac{10}{3}}(P_{1,4,1})}^4+1\right)^{\frac{5}{2}}\left(\int_{P_{1,4,1}}\omega_r^2dyds+\int_{P_{1,4,1}}\omega_z^2dyds\right).\end{equation}

It is time to note how $\|V\|_{L^{\frac{10}{3}}(P_{1,4,1})}^4$ is
controlled. Recall:
\[V=\left[\begin{array}{cc}
\frac{\partial v_r}{\partial r}-\frac{1}{r^2} & \frac{\partial v_z}{\partial r}\\
\ &\ \\
\frac{\partial v_r}{\partial z} & \frac{\partial v_z}{\partial z}
\end{array}\right].\]
Applying Proposition \ref{lemmawith10/3} with $P_{1,4,1}$ being the domain on
the left, $P_{\frac{1}{2},\frac{16}{3},1}$ being the domain on the
right we can deduce that:
\begin{equation}\label{Vbeforeestimate}\|V\|_{L^{\frac{10}{3}}(P_{1,4,1})}\leq
c_{21}\left(\|\omega_{\theta}\|_{L^{\frac{10}{3}}(P_{\frac{1}{2},\frac{16}{3},1})}+\|v\|_{L^{\frac{10}{3}}(P_{\frac{1}{2},\frac{16}{3},1})}+1\right)\end{equation}
Even though at this point we already know that $V$ is a priori
bounded by standard energy estimates and our pointwise bound on
$\omega_{\theta}$, we use the method in Section 3 to prove a bound
for
$\|\omega_{\theta}\|_{L^{\frac{10}{3}}(P_{\frac{1}{2},\frac{16}{3},1})}$.
 This allows for better control of $\|V\|_{L^{\frac{10}{3}}(P_{1,4,1})}$.  The argument amounts to running Moser's iteration only once.
 Recall:\[\Omega=\frac{\omega_{\theta}}{r}\] and that in Section 3
 we defined a constant $\Lambda$ and functions:
\[\begin{array}{cc}\Omegabar_+=\left\{\begin{array}{cc}\Omega+\Lambda&\Omega\geq 0,\\
\Lambda & \Omega<0,\end{array}\right. & \Omegabar_-=\left\{\begin{array}{cc}-\Omega+\Lambda&\Omega\leq 0,\\
\Lambda & \Omega>0.\end{array}\right.\end{array}.\] We will utilize
estimate (\ref{iteration1}) to control the$L^{\frac{10}{3}}$ norm
of $\omega_{\theta}$, but first we must manipulate the domains
that appear in the inequality to fit our current setting. We
recall (\ref{iteration1}) from Section 3:
\[\al \int_{P(\sigma_2)}\Omegabar_+^{2q\gamma}dyds&\leq c_{22}\left(\frac{c_{23}q^2}{\tau^4}\left(K_b^4(C_{1,4,1})+\Lambda^2+1\right)\int_{P(\sigma_1)}\Omegabar_+^{2q}dyds\right)^{\gamma},
\eal\] where $P(\sigma_i)=P_{5-4\sigma_i,4\sigma_i,1}$,
$\tau=\sigma_1-\sigma_2$, and $\gamma=1=\frac{2}{n}$.
 We replace this $P(\sigma_i)$ with $P(\sigma_i)=P_{\frac{1}{4}(5-4\sigma_i),\frac{64}{9}\sigma_i,1}$.  The argument over
this domain would be identical to that in Section 3, with
$\Lambda=\|v_{\theta}\|_{L^{\infty}(P_{\frac{1}{4},\frac{64}{9},1})}\leq
4\|rv_{0,\theta}\|_{L^{\infty}(\reals^3)}$, up until the point
where we derive (\ref{iteration1}). We recall the condition on $q$
is $q\geq1$ and the condition on $\sigma_1,\sigma_2$ here, in this
setting, would be $\frac{265}{512}\leq\sigma_2<\sigma_1\leq1$.
Also note that $\gamma=1+\frac{2}{n},\ n=3$ and so
$\gamma=\frac{5}{3}$.  We choose $q=1,\ \sigma_1=1,\
\sigma_2=\frac{3}{4}$ to get:
\[\al \int_{P_{\frac{1}{2},\frac{16}{3},1}}\Omegabar_+^{\frac{10}{3}}dyds&\leq c_{24}\left(\left(K_b^4(P_{\frac{1}{4},\frac{64}{9},1})+\Lambda^2+1\right)\int_{P_{\frac{1}{4},\frac{64}{9},1}}\Omegabar_+^2dyds\right)^{\frac{5}{3}}.
\eal\] Similarly we can also get:
\[\al \int_{P_{\frac{1}{2},\frac{16}{3},1}}\Omegabar_-^{\frac{10}{3}}dyds&\leq c_{24}\left(\left(K_b^4(C_{\frac{1}{4},\frac{64}{9},1})+\Lambda^2+1\right)\int_{P_{\frac{1}{4},\frac{64}{9},1}}\Omegabar_-^2dyds\right)^{\frac{5}{3}}.
\eal\] Taking the $\frac{3}{10}$ power of both sides we derive:
\[\al \|\Omegabar_+\|_{L^{\frac{10}{3}}(P_{\frac{1}{2},\frac{16}{3},1})}&\leq c_{25}\left(K_b^4(C_{\frac{1}{4},\frac{64}{9},1})+\Lambda^2+1\right)^{\frac{1}{2}}\|\Omegabar_+\|_{L^2(P_{\frac{1}{4},\frac{64}{9},1})},
\eal\] and
\[\al \|\Omegabar_-\|_{L^{\frac{10}{3}}(P_{\frac{1}{2},\frac{16}{3},1})}&\leq c_{25}\left(K_b^4(C_{\frac{1}{4},\frac{64}{9},1})+\Lambda^2+1\right)^{\frac{1}{2}}\|\Omegabar_-\|_{L^2(P_{\frac{1}{4},\frac{64}{9},1})}.
\eal\] We can combine the above two estimates to get:
\[\left\|\frac{\omega_{\theta}}{r}\right\|_{L^{\frac{10}{3}}(P_{\frac{1}{2},\frac{16}{3},1})}\leq
c_{26}\left(K_b^4(C_{\frac{1}{4},\frac{64}{9},1})+\Lambda^2+1\right)^{\frac{1}{2}}\left\|\frac{\omega_{\theta}}{r}\right\|_{L^2(P_{\frac{1}{4},\frac{64}{9},1})}.\]
We note $r$ is bounded between two positive constants on the left
and on the right, to arrive at:
\[\|\omega_{\theta}\|_{L^{\frac{10}{3}}(P_{\frac{1}{2},\frac{16}{3},1})}\leq
c_{27}\left(K_b^4(C_{\frac{1}{4},\frac{64}{9},1})+\Lambda^2+1\right)^{\frac{1}{2}}\|\omega_{\theta}\|_{L^2(P_{\frac{1}{4},\frac{64}{9},1})}.\]
Apply this to (\ref{Vbeforeestimate}):
\[\|V\|_{L^{\frac{10}{3}}(P_{1,4,1})}\leq c_{28}\left(\left(K_b^4(C_{\frac{1}{4},\frac{64}{9},1})+\Lambda^2+1\right)^{\frac{1}{2}}\|\omega_{\theta}\|_{L^2(P_{\frac{1}{4},\frac{64}{9},1})}+\|v\|_{L^2(P_{\frac{1}{2},\frac{16}{3},1})}+1\right).\]
Thus,
\[\|V\|_{L^{\frac{10}{3}}(P_{1,4,1})}^4\leq c_{29}\left(\left(K_b^4(C_{\frac{1}{4},\frac{64}{9},1})+\|rv_{0,\theta}\|_{L^{\infty}(\reals^3)}+1\right)^2\|\omega_{\theta}\|_{L^2(P_{\frac{1}{4},\frac{64}{9},1})}^4+\|v\|_{L^2(P_{\frac{1}{2},\frac{16}{3},1})}^4+1\right),\]
utilizing $\Lambda\leq 4\|v_{0,\theta}\|_{L^{\infty}(\reals^3)}$.
Apply this to (\ref{moserbeforeVomegar}), we get:
\[\sup_{P_{2,3,1}}\left(\omega_r^2+\omega_z^2\right)\leq A\left(\int_{P_{1,4,1}}\omega_r^2dyds+\int_{P_{1,4,1}}\omega_z^2dyds\right),\]
where A is the constant defined as:
\[A=c_{30}\left(K_b^4(C_{\frac{1}{10},10,1})+\left(K_b^4(C_{\frac{1}{10},10,1})+\|rv_{0,\theta}\|_{L^{\infty}(\reals^3)}+1\right)\|\omega_{\theta}\|_{L^2(P_{\frac{1}{10},10,1})}^2+\|v\|_{L^2(P_{\frac{1}{10},10,1})}^2+1\right)^5.\]
The domain is enlarged proportionally to make the right hand side more uniform.

\vspace{.5cm}
\textbf{Re-scaling:} Recall our "tilde" notation and that what has
actually been shown to this point is:
\begin{equation}\label{beforerescalingr}\sup_{P_{2,3,1}}\left(\~{\omega}_r^2+\~{\omega}_z^2\right)(\~{x},\~{t})\leq
\~{A}\left(\int_{P_{1,4,1}}\~{\omega}_r^2d\~{x}d\~{t}+\int_{P_{1,4,1}}\~{\omega}_z^2d\~{x}d\~{t}\right),\end{equation}
where $\~{x}=\frac{x}{k},\ \~{t}=\frac{t}{k^2},\
\~{\omega}_r(\~{x},\~{t})=k^2\omega_r(k\~{x},k^2\~{t})$, $
\~{\omega}_z(\~{x},\~{t})=k^2\omega_z(k\~{x},k^2\~{t})$, and\[\~{A}=c_{30}\left(K_{\~{b}}^4(C_{\frac{1}{10},10,1})+\left(K_{\~{b}}^4(C_{\frac{1}{10},10,1})+\|\~{r}\~{v}_{0,\theta}\|_{L^{\infty}(\reals^3)}+1\right)\|\~{\omega}_{\theta}\|_{L^2(P_{\frac{1}{10},10,1})}^2+\|\~{v}\|_{L^2(P_{\frac{1}{10},10,1})}^2+1\right)^5.\] From the
scaling in Section 2:
\[K_{\~{b}}(C_{\frac{1}{10},10,1})=\|\~{b}(\~{x},\~{t})\|_{L^{\infty}(-1,0;(C_{\frac{1}{10},10,1}))}=\frac{1}{k^{\frac{1}{2}}}\|b\|_{L^{\infty}(-k^2,0;L^2(C_{\frac{1}{10},10,k}))},\]
\[\|\~{v}(\~{x},\~{t})\|_{L^2(P_{\frac{1}{10},10,1})}\frac{1}{k^{\frac{3}{2}}}=\|v(x,t)\|_{L^2(P_{\frac{1}{10},10,k})},\]
and
\[\|\~{\omega}(\~{x},\~{t})\|_{L^2(P_{\frac{1}{10},10,1})}=\frac{1}{k^{\frac{1}{2}}}\|\omega(x,t)\|_{L^2(P_{\frac{1}{10},10,k})}.\]
Also $\|rv_{0,\theta}\|_{L^{\infty}(\reals^3)}$ is scaling
invariant.  Finally, $\~{A}$
scales in the following way:
\[\al\~{A}&=c_{30}\left(K_{\~{b}}^4(C_{\frac{1}{10},10,1})+\left(K_{\~{b}}^4(C_{\frac{1}{10},10,1})+\|\~{r}\~{v}_{0,\theta}\|_{L^{\infty}(\reals^3)}+1\right)\|\~{\omega}_{\theta}\|_{L^2(P_{\frac{1}{10},10,1})}^2+\|\~{v}\|_{L^2(P_{\frac{1}{10},10,1})}^2+1\right)^5\\
&=\frac{c_{31}}{k^{15}}\left[\left(K_{b}^4(C_{\frac{1}{10},10,k})+k^2\|rv_{0,\theta}\|_{L^{\infty}(\reals^3)}+k^2\right)\|\omega_{\theta}\|_{L^2(P_{\frac{1}{10},10,k})}^2\right.\\
&\hspace{9cm}\left.+kK_{b}^4(C_{\frac{1}{10},10,k})+\|v\|_{L^2(P_{\frac{1}{10},10,k})}^2+k^3\right]^5.\eal\]
Apply all of this to
(\ref{beforerescalingr}) to achieve:
\[\al\sup_{P_{2,3,k}}k^4&\left(\omega_r^2(x,t)+\omega_z^2(x,t)\right)\\
&\leq\frac{c_{31}}{k^{16}}\left[\left(K_{b}^4(C_{\frac{1}{10},10,k})
+k^2\|rv_{0,\theta}\|_{L^{\infty}(\reals^3)}+k^2\right)\|\omega_{\theta}\|_{L^2(P_{\frac{1}{10},10,k})}^2\right.\\
&\hspace{2cm}\left.kK_{b}^4(C_{\frac{1}{10},10,k})+\|v\|_{L^2(P_{\frac{1}{10},10,k})}^2+k^3\right]^5\left(\|\omega_r\|_{L^2(P_{\frac{1}{10},10,k})}^2+\|\omega_z\|_{L^2(P_{\frac{1}{10},10,k})}^2\right).\eal\]
Therefore,
\[\al\|&\omega_r\|_{L^{\infty}(P_{2,3,k})}+\|\omega_z\|_{L^{\infty}(P_{2,3,k})}\\
&\leq\frac{c_{32}}{k^{10}}\left[\left(K_{b}^4(C_{\frac{1}{10},10,k})
+k^2\|rv_{0,\theta}\|_{L^{\infty}(\reals^3)}+k^2\right)\|\omega_{\theta}\|_{L^2(P_{\frac{1}{10},10,k})}^2\right.\\
&\hspace{3cm}\left.+kK_{b}^4(C_{\frac{1}{10},10,k})+\|v\|_{L^2(P_{\frac{1}{10},10,k})}^2+k^3\right]^{\frac{5}{2}}\left(\|\omega_r\|_{L^2(P_{\frac{1}{10},10,k})}+\|\omega_z\|_{L^2(P_{\frac{1}{10},10,k})}\right).\eal\]
This proves (ii) of Theorem \ref{ourtheorem}.

\begin{acknowledgement} We thank Professor T. P.
Tsai for sending us \cite{CSTY1} before its publication, and for his
useful suggestions.
\end{acknowledgement}
\end{section}

\end{document}